\def\be{\begin{equation}}
\def\ee{\end{equation}}
\def\bm{\begin{matrix}}
\def\em{\end{matrix}}
\def\hs{{\mathrm{HS}}}
\def\SL{{\mathrm {SL}}}
\newcommand{\Z}{{\mathbb Z}}
\newcommand{\R}{{\mathbb R}}
\newcommand{\Q}{{\mathbb Q}}
\newcommand{\C}{{\mathbb C}}
\newcommand{\D}{{\mathbb D}}
\renewcommand{\H}{{\mathbb H}}
\newcommand{\dist}{{\mathrm{dist}}}
\newcommand{\tr}{{\mathrm{Tr}}}
\newtheorem*{main}{Main Theorem}
\newtheorem{thm}{Theorem}
\newtheorem{cor}{Corollary}
\newtheorem{lemma}{Lemma}[section]
\begin{document}

\title[Absolute Continuity of the IDS for the AMO]
{Absolute Continuity of the Integrated Density of States for the Almost Mathieu Operator
with Non-Critical Coupling}

\author{Artur Avila and David Damanik}

\address{
CNRS UMR 7599, Laboratoire de Probabilit\'es et Mod\`eles al\'eatoires\\
Universit\'e Pierre et Marie Curie--Boite courrier 188\\
75252--Paris Cedex 05, France } \email{artur@ccr.jussieu.fr}

\address{
Department of Mathematics\\
Rice University\\
Houston, TX 77005, USA } \email{damanik@rice.edu}

\thanks{This research was partially conducted during the period A.\ A.\ served as a Clay Research Fellow.
D.\ D.\ was supported in part by NSF grants DMS--0500910 and
DMS--0653720. He is grateful to Rowan Killip for illuminating
conversations. Parts of this work were done during the
Mini-Workshop ``Dynamics of Cocycles and One-Dimensional Spectral
Theory,'' which was held in Oberwolfach in November 2005. The
authors thank Svetlana Jitomirskaya for several discussions and
the anonymous referee for comments that led to improvements of the
paper.}

\begin{abstract}
We show that the integrated density of states of the almost
Mathieu operator is absolutely continuous if and only if the
coupling is non-critical.  We deduce for subcritical coupling that
the spectrum is purely absolutely continuous for almost every
phase, settling the measure-theoretical case of Problem 6 of Barry
Simon's list of Schr\"odinger operator problems for the
twenty-first century.
\end{abstract}

\maketitle

\section{Introduction}

This work is concerned with the almost Mathieu operator
$H=H_{\lambda,\alpha,\theta}$ defined on $\ell^2(\Z)$ \be (H
u)_n=u_{n+1}+u_{n-1}+2 \lambda \cos(2\pi[\theta+n \alpha]) u_n \ee
where $\lambda \neq 0$ is the coupling, $\alpha \in \R \setminus
\Q$ is the frequency, and $\theta \in \R$ is the phase.  This is
the most heavily studied quasiperiodic Schr\"odinger operator,
arising naturally as a physical model (see \cite {L2} for a recent
historical account and for the physics background).

We are interested in the integrated density of states, which can
be defined as the limiting distribution of eigenvalues of the
restriction of $H=H_{\lambda,\alpha,\theta}$ to large finite
intervals.  This limiting distribution (which exists by \cite
{AS2}) turns out to be a $\theta$ independent continuous
increasing surjective function $N=N_{\lambda,\alpha}:\R \to
[0,1]$.  The support of the probability measure $dN$ is precisely
the spectrum $\Sigma=\Sigma_{\lambda,\alpha}$. Another important
quantity, the Lyapunov exponent $L=L_{\lambda,\alpha}$, is
connected to the integrated density of states by the Thouless
formula $L(E)=\int \ln |E-E'| \, dN(E')$.

Since $\Sigma$ is a Cantor set (\cite {BS}, \cite {CEY},
\cite{L1}, \cite {P}, \cite {AK1}, \cite {AJ1}), $N$ is ``Devil's
staircase''-like. It is also known that $\Sigma$ has Lebesgue
measure $|4-4|\lambda||$ \cite {AK1}. See \cite{L2} for the
history of these two problems and additional references. The
spectrum has zero Lebesgue measure precisely when $|\lambda|=1$,
the critical coupling.

Recently there was quite a bit of interest in the regularity
properties of the integrated density of states.  The modulus of
continuity is easily seen to be quite poor for generic
frequencies, so positive results (such as H\"older continuity
\cite {GS1}) in this direction have depended on suitable (full
measure) conditions on the frequency. There was more hope in
proving positive general results on absolute continuity of $N$
since \cite {L} established that for almost every $\alpha$ and
$\lambda$ non-critical (or every $\alpha$ and certain values of
$\lambda$), $dN$ has an absolutely continuous component (for
critical coupling, $dN$ never has an absolutely continuous
component due to zero Lebesgue measure of $\Sigma$).  Later, this
result was improved to full absolute continuity under a full
measure condition on $\alpha$ and all non-critical couplings \cite
{J} (the condition on $\alpha$ was later improved in \cite {AJ1}).
Let us also mention the recent work of Goldstein-Schlag \cite
{GS2} that establishes absolute continuity of the integrated
density of states in the regime of positive Lyapunov exponent
under a full measure condition on $\alpha$, but for more general
potentials.

In this work, we prove the following result, which completely
describes the set of parameter values for which the integrated
density of states is absolutely continuous.

\begin{main}

The integrated density of states of $H_{\lambda,\alpha,\theta}$ is
absolutely continuous if and only if $|\lambda| \neq 1$.

\end{main}

We point out that prior to this result, it was unknown whether there could be parameters
for which the spectrum contains pieces (non-empty open subsets) of both positive and zero
Lebesgue measure.

Our Main Theorem has an important consequence regarding the nature
of the spectral measures of the operators
$H_{\lambda,\alpha,\theta}$.  For $|\lambda| \geq 1$, it is known
that the spectral measures have no absolutely continuous
component. For $|\lambda|<1$, one expects the spectral measures to
be absolutely continuous. Indeed, the belief in such a simple and
general description of the nature of the spectral measures dates
back to the fundamental work of Aubry-Andr\'e \cite {AA}.\footnote
{In \cite {AA}, it is actually proposed that spectral measures are
absolutely continuous for $|\lambda|<1$ and atomic for
$|\lambda|>1$ (with both regimes being linked in the heuristic
reasoning).  The problem turned out to be very subtle, and the
claim for $|\lambda|>1$ was soon shown to be wrong as stated (see
further discussion below).  This discovery did generate some
doubts regarding the claim for $|\lambda|<1$ (see Problem 5 in
Section 11 of \cite {s3}) before optimism was regained with the
work of Last \cite {L}.} More recently, this conjecture shows up
as Problem~6 of Simon's list of open problems in the theory of
Schr\"odinger operators for the twenty-first century \cite {s2}.
As we will discuss in more detail below, the result is known for
$\alpha$'s satisfying a Diophantine condition. The strategy of the
proof, however, clearly does not extend to the case of $\alpha$'s
that are well approximated by rational numbers. Indeed, Simon
points out in \cite{s2} that ``one will need a new understanding
of absolutely continuous spectrum to handle the case of Liouville
$\alpha$'s.''

A beautiful result of Kotani \cite {k2}, which has not yet
received the attention and exposure it deserves, shows that if the
Lyapunov exponent vanishes in the spectrum, then absolute
continuity of the IDS is equivalent to absolute continuity of the
spectral measures for almost every $\theta$; see also the survey
\cite{D} of Kotani theory and its applications. By \cite {BJ}, if
the coupling is subcritical, the Lyapunov exponent is zero on the
spectrum.  We therefore obtain the following corollary.

\begin{cor}

If $|\lambda|<1$, then the spectral measures of
$H_{\lambda,\alpha,\theta}$ are absolutely continuous for almost
every $\theta$.

\end{cor}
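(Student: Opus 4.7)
The plan is essentially to assemble three ingredients already isolated in the discussion preceding the corollary, with almost no additional work.

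First, I would invoke the Main Theorem. Since we assume $|\lambda|<1$, in particular $|\lambda|\neq 1$, the Main Theorem asserts that the integrated density of states $N_{\lambda,\alpha}$ of $H_{\lambda,\alpha,\theta}$ is absolutely continuous on $\R$. This is the substantial new input, but in the context of the corollary it is a black box.

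Next, I would bring in the Bourgain--Jitomirskaya result \cite{BJ}, quoted in the paragraph above. It tells us that in the subcritical regime $|\lambda|<1$, the Lyapunov exponent $L_{\lambda,\alpha}(E)$ vanishes for every $E$ in the spectrum $\Sigma_{\lambda,\alpha}$. This is precisely the hypothesis required to apply the Kotani-type equivalence.

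Finally, I would invoke Kotani's theorem \cite{k2} (see also the survey \cite{D}). Under the assumption $L\equiv 0$ on $\Sigma$, absolute continuity of $dN$ is equivalent to absolute continuity of the spectral measures of $H_{\lambda,\alpha,\theta}$ for Lebesgue-a.e.\ $\theta$. Combining the three steps, $dN$ is absolutely continuous by the Main Theorem, $L$ vanishes on $\Sigma$ by \cite{BJ}, so Kotani's equivalence yields that the spectral measures are absolutely continuous for a.e.\ $\theta$, which is the conclusion.

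There is essentially no obstacle at this stage: the work is entirely in the Main Theorem itself, and the corollary is a one-line deduction once the dictionary provided by Kotani theory and the subcritical vanishing of the Lyapunov exponent are available. The only thing one should be careful about is the ``almost every $\theta$'' qualifier; it is intrinsic to Kotani's statement and cannot be upgraded to ``every $\theta$'' at this level of generality, which is consistent with what the corollary claims.
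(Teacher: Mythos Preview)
Your proposal is correct and matches the paper's argument exactly: the corollary is deduced from the Main Theorem by combining Kotani's equivalence \cite{k2} with the vanishing of the Lyapunov exponent on the spectrum for $|\lambda|<1$ from \cite{BJ}. There is nothing to add or change.
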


This settles Problem~6 of \cite{s2}, at least in an almost
everywhere sense. 
\footnote {After this work was completed, an approach to proving
absolutely continuous spectrum for every phase has been proposed
by the first named author. The proposed solution does use (as an
important step) the techniques developed here.}

\medskip

As mentioned before, the Main Theorem had been established already
for certain ranges of parameters.  Let us discuss in more detail
which range of parameters could be covered by such methods, and
which range of parameters will be treated by the new techniques
introduced in this paper.

Due to the symmetries of the system, we may restrict our attention to
$\lambda>0$.  It is known that
$N_{\lambda,\alpha}(E)=N_{\lambda^{-1},\alpha}(\lambda^{-1}
E)$ (Aubry duality).  Thus in order to establish the Main Theorem it is
enough to show that $N_{\lambda,\alpha}$ is absolutely continuous for
$0<\lambda<1$.

In the ``Diophantine case,'' the following argument has been successful. One establishes
pure point spectrum for almost every phase for $H_{\lambda^{-1},\alpha,\theta}$, which
implies by the strong version of
Aubry duality \cite {GJLS} that the spectrum of $H_{\lambda,\alpha,\theta}$ is
absolutely continuous for almost every phase, which obviously implies that
$N_{\lambda,\alpha}$ is absolutely continuous.

Pure point spectrum for $H_{\lambda^{-1},\alpha,\theta}$ for
almost every phase was indeed established by Jitomirskaya for all
$\lambda^{-1}>1$ under a full measure condition on $\alpha$; see
\cite {J}. This result was recently strengthened by Avila and
Jitomirskaya as follows. Let $p_n/q_n$ be the continued fraction
approximants to $\alpha$ and let \be \label {alphaqn}
\beta=\beta(\alpha)=\limsup_{n \to \infty} \frac {\ln q_{n+1}}
{q_n}. \ee If $\beta = 0$ and $\lambda^{-1}>1$, then
$H_{\lambda^{-1},\alpha,\theta}$ has pure point spectrum for
almost every phase; see \cite[Theorem~5.2]{AJ1}. As a consequence,
the main theorem is known in the case $\beta = 0$.

However, it has been understood for a long time that this approach
cannot work for all $\alpha$ \cite {AS1}. Indeed, if $\beta > 0$
and $1<\lambda^{-1}<e^\beta$, Gordon's Lemma \cite {G} (and the
formula for the Lyapunov exponent \cite {BJ}) shows that there is
no point spectrum at all!  It is expected that pure point spectrum
for almost every phase does hold when $\lambda^{-1}>e^\beta$. This
is currently established when $\lambda^{-1}>e^{16 \beta/9}$ \cite
{AJ1}. In any event, the main theorem cannot be proven
via the ``localization plus duality'' route when $\beta > 0$.

In this work, we provide a new approach to absolute continuity of
the IDS via rational approximations.  This approach works when
$0<\lambda<1$ and $\beta>0$.  A feature of this approach is that
absolutely continuous spectrum for almost every phase is obtained
as a consequence of the absolute continuity of the IDS, rather
than the other way around as is usually the case in the regime of
zero Lyapunov exponent. In other words, our work establishes the
first application of Kotani's gem (a sufficient condition for
purely absolutely continuous spectrum in terms of the IDS and the
Lyapunov exponent) from \cite{k2}.

\section{Preliminaries}

\subsection{$\SL(2,\R)$-action}

Recall the usual action of $\SL(2,\C)$ on $\overline \C$,
$$
\left ( \bm a & b \\ c & d \em \right ) \cdot z=\frac {az+b}
{cz+d}.
$$

In the following we restrict to matrices $A \in \SL(2,\R)$.

Such matrices preserve $\H=\{z \in \C,\, \Im z>0\}$. The
Hilbert-Schmidt norm of
$$
A=\left (\bm a & b \\ c & d \em \right )
$$
is $\|A\|_\hs=(a^2+b^2+c^2+d^2)^{1/2}$.  Let $\phi(z)=\frac
{1+|z|^2} {2 \Im z}$ for $z \in \H$.  Then
\begin{align*}
\phi(A \cdot i ) & = \phi ( \tfrac{ai + b}{ci + d} ) \\
& = \frac {1+|\frac{ai + b}{ci + d}|^2} {2 \Im \frac{ai + b}{ci + d}}\\
& = \frac {1+\frac{a^2 + b^2}{c^2 + d^2}} {2 \Im \frac{(ai + b)(-ci + d)}{c^2 + d^2}}\\
& = \frac {a^2 + b^2 + c^2 + d^2} {2 (ad-bc)}
\end{align*}
and hence $\|A\|_\hs^2=2 \phi(A \cdot i)$. Thus $\phi(z)$ is half the square of the
Hilbert-Schmidt norm of an $\SL(2,\R)$ matrix that takes $i$ to $z$.

The rotation matrices
\be
R_\theta=\left ( \bm \cos 2 \pi \theta & -\sin 2 \pi \theta \\
\sin 2 \pi \theta & \cos 2 \pi \theta \em \right ) \ee are characterized by the fact that
they fix $i$. One easily checks that $\|R_\theta A\|_\hs=\|AR_\theta\|_\hs=\|A\|_\hs$. In
particular $\phi(R_\theta z)=\phi(z)$.

We notice that $\phi(z) \geq 1$, $\phi(i)=1$ and $|\ln \phi(z)-\ln \phi(w)| \leq
\dist_\H(z,w)$ where $\dist_\H$ is the hyperbolic metric on $\H$, normalized so that
$\dist_\H(a i , i)=|\ln a|$ for $a > 0$.

If $|\tr A|<2$, then there exists a unique fixed point $z \in \H$, $A \cdot z=z$.  Let
$0<\rho<1/2$ be such that $\tr A=2 \cos 2 \pi \rho$.  Let us show that
\begin{equation}\label{2 sin}
\phi(z)=\frac {1} {2 \sin 2 \pi \rho} (\|A\|^2_\hs-2 \cos 4 \pi \rho)^{1/2},
\end{equation}
so that
\begin{equation}\label{phi}
\phi(z) \leq \frac {\sqrt{2} \|A\|_\hs} {2 \sin 2 \pi \rho}.
\end{equation}
Let $B \in \SL(2,\R)$ be such that $B \cdot i=z$. Then \be A=B R_{\pm \rho} B^{-1}
\ee since $B^{-1}A B$ fixes $i$ and hence is a rotation that has the same trace as $A$.
Write $B=R D$ with $R$ a rotation and $D$ diagonal, $$D=\left ( \bm \lambda & 0
\\ 0 & \lambda^{-1} \em \right ).$$ (First stretch $i$ suitably and then rotate it to $z$.)
Then $D R_{\pm \rho} D^{-1}$ has a unique fixed point $R^{-1} \cdot z=D \cdot i$ and
$$\phi(z)=\phi(R^{-1} \cdot z)=(\lambda^2+\lambda^{-2})/2=\|D\|^2_\hs/2.$$ On the other
hand,
\begin{align*}
\|A\|^2_\hs & =\|D R_{\pm \rho} D^{-1}\|^2_\hs \\
& = 2 \cos^2 2 \pi \rho+(\lambda^4+\lambda^{-4}) \sin^2 2 \pi \rho \\
& = 2 \cos^2 2 \pi \rho +(\|D\|^4_\hs-2) \sin^2 2 \pi \rho \\
& = 2 \cos 4 \pi \rho + \|D\|^4_\hs \sin^2 2 \pi \rho\\
& = 2 \cos 4 \pi \rho + 4 \phi(z)^2 \sin^2 2 \pi \rho,
\end{align*}
and we obtain \eqref {2 sin}.

\subsection{Lyapunov Exponent}

Fix $\lambda \in \R$, $\alpha \in \R$, $E \in \R$.  Let \be
A(\theta)=A^{(\lambda,E)}(\theta)= \left (\bm E-2\lambda \cos 2 \pi \theta & -1 \\ 1 & 0
\em \right ). \ee Denote \be A_n(\theta)=A^{(\lambda,\alpha,E)}_n(\theta)= A(\theta+(n-1)
\alpha) \cdots A(\theta). \ee The Lyapunov exponent $L=L_{\lambda,\alpha}$ is defined as
\be L(E)= \lim_{n \to \infty} \frac {1} {n} \int_{\R/\Z} \ln \|A_n(\theta)\| \, d\theta.
\ee By unique ergodicity of irrational rotations, for $\alpha \in \R \setminus \Q$, we
have $L(E)=\lim \frac {1} {n} \sup_\theta \ln \|A_n(\theta)\|$.

\begin{thm}[\cite {BJ}, Corollary 2] \label {continuity}

If $0<\lambda<1$, $\alpha \in \R \setminus \Q$ and
$E \in \Sigma_{\lambda,\alpha}$ then $L(E)=0$.

\end{thm}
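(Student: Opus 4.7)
Since $A_n(\theta) \in \SL(2,\R)$ implies $\|A_n(\theta)\| \geq 1$, we have $L(E) \geq 0$ for every $E \in \R$, so the content of the theorem is the matching upper bound $L(E) \leq 0$ on the spectrum when $0 < \lambda < 1$. I would prove this by combining rational approximation of $\alpha$ with joint continuity of the Lyapunov exponent in $(E,\alpha)$.

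At a rational frequency $p/q$, the potential is $q$-periodic and $A_{qn}(\theta) = A_q(\theta)^n$, so the Lyapunov exponent admits the explicit formula
\[
L_{\lambda,p/q}(E) = \tfrac{1}{q} \int_0^1 \max\bigl(0, \ln \rho(A_q^{(E,\lambda)}(\theta))\bigr) \, d\theta,
\]
where $\rho$ denotes the spectral radius. The spectrum $\Sigma_{\lambda,p/q}$ is a union of $q$ bands, and on the interior of a band one has $|\tr A_q(\theta)| \leq 2$ on an open set of $\theta$'s, where $\rho = 1$. Taking continued-fraction approximants $p_n/q_n \to \alpha$, the bands of $\Sigma_{\lambda,p_n/q_n}$ shrink in a controlled way (through subharmonic-function control of the trace polynomial $\theta \mapsto \tr A_{q_n}^{(E,\lambda)}(\theta)$), and combined with the trace/norm identities \eqref{2 sin}--\eqref{phi} from Section~2 on the elliptic set, this yields $L_{\lambda,p_n/q_n}(E_n) \to 0$ for $\lambda < 1$ whenever $E_n \in \Sigma_{\lambda,p_n/q_n}$ converges to a prescribed $E_0 \in \Sigma_{\lambda,\alpha}$. (The analogous limit at supercritical coupling would give $\ln|\lambda|$, matching Herman's subharmonic lower bound.) The existence of such $E_n$ is guaranteed by weak convergence of the IDS $N_{\lambda,p_n/q_n} \to N_{\lambda,\alpha}$, which is immediate from the definition of the IDS as a limit of eigenvalue counts on long finite intervals.

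To conclude, I would invoke joint continuity of $L_\lambda$ in $(E,\alpha)$ at pairs with $\alpha$ irrational:
\[
L_{\lambda,\alpha}(E_0) = \lim_{n \to \infty} L_{\lambda,p_n/q_n}(E_n) = 0.
\]
The main obstacle is precisely this continuity. Lower semicontinuity is automatic from the characterization $L(E) = \lim \tfrac{1}{n} \sup_\theta \ln\|A_n(\theta)\|$, but upper semicontinuity at irrational $\alpha$ for analytic quasiperiodic cocycles is the central technical theorem of the cited Bourgain--Jitomirskaya paper and rests on quantitative subharmonic-function estimates, in particular large-deviation bounds for $\tfrac{1}{n} \ln\|A_n(\theta)\|$ uniform on a complex strip $|\Im\theta| < \eta$. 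Without this continuity the rational-to-irrational passage collapses, leaving only the pointwise Herman bound and the explicit periodic computation in isolation; the secondary technical point --- quantifying how the periodic bands collapse onto $\Sigma_{\lambda,\alpha}$ --- is relatively softer but still requires careful control of the trace polynomial along continued-fraction approximants.
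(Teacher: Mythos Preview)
The paper does not give its own proof of this statement: it is quoted as Corollary~2 of Bourgain--Jitomirskaya \cite{BJ} and used as a black box throughout. There is therefore no in-paper argument to compare your proposal against.

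That said, your outline is, at the architectural level, precisely the Bourgain--Jitomirskaya route to their Corollary~2: (i) compute the Lyapunov exponent at rational frequencies via the periodic formula, (ii) show that along approximants $p_n/q_n$ and energies $E_n \in \Sigma_{\lambda,p_n/q_n}$ converging to $E_0 \in \Sigma_{\lambda,\alpha}$ one has $L_{\lambda,p_n/q_n}(E_n) \to \max(0,\ln|\lambda|)$, and (iii) transfer this to irrational $\alpha$ via joint continuity of $L$ in $(E,\alpha)$. You also correctly isolate the real content --- step~(iii), upper semicontinuity at irrational $\alpha$ --- as the main theorem of \cite{BJ}, resting on large-deviation estimates for analytic cocycles. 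Since you are invoking the main theorem of \cite{BJ} to deduce its own corollary, your sketch is not an independent proof so much as a correct unpacking of how \cite{BJ} arrive at the stated result.

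Two minor inaccuracies are worth flagging. First, the references to \eqref{2 sin}--\eqref{phi} are misplaced: those identities locate the elliptic fixed point in $\H$ and play no role in the periodic Lyapunov-exponent computation, which goes instead through the Chambers formula and an elementary estimate of $\tfrac{1}{q}\int \ln^+\rho(A_q(\theta))\,d\theta$ on the hyperbolic set. Second, the convergence $L_{\lambda,p_n/q_n}(E_n) \to 0$ does not require any ``subharmonic control of the trace polynomial'' beyond Chambers itself: for $0<\lambda<1$ and $E \in \Sigma_{\lambda,p/q}$ the trace ranges over an interval of length $4\lambda^q$ meeting $[-2,2]$, so the spectral radius is at most $1+O(\lambda^{q/2})$ everywhere and $L_{\lambda,p/q}(E)=O(\lambda^{q/2}/q)$ directly.
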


\subsection{The Almost Mathieu Operator}

We extend the definitions of the introduction to the case of rational frequencies
$p/q$ (here and in what follows, $p/q$ will always denote the reduced fraction
of a rational number).  We
can of course define $H_{\lambda,p/q,\theta}$ by the same formula as in the introduction;
however, the several related quantities are no longer $\theta$ independent.

Let $N_{\lambda,p/q}$ be the average over $\theta$ of the integrated density of states of
the operators $H_{\lambda,p/q,\theta}$. Then we still have the Thouless formula \be
L(E)=\int \ln |E'-E| dN(E'). \ee

Let $\Sigma_{\lambda,p/q}$ be the union over $\theta$ of the spectra of
$H_{\lambda,p/q,\theta}$ and let $\sigma_{\lambda,p/q}$ be the intersection over $\theta$
of the spectra of $H_{\lambda,p/q,\theta}$.

With those definitions, $\Sigma_{\lambda,\alpha}$ and $N_{\lambda,\alpha}$ are continuous
in $\lambda$ and $\alpha$.

\begin{thm}[\cite{AMS}, Proposition~7.1] \label {ams}

The Hausdorff distance between $\Sigma_{\lambda,\alpha}$ and
$\Sigma_{\lambda,\alpha'}$ is at most $6 (2 \lambda)^{1/2}
|\alpha-\alpha'|^{1/2}$, for $|\alpha-\alpha'| \leq \frac {C} {\lambda}$,
where $C>0$ is some constant.

\end{thm}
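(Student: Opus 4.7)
I would prove this by comparing the operators $H_{\lambda,\alpha,\theta}$ and $H_{\lambda,\alpha',\theta}$ directly on $\ell^2(\Z)$, exploiting the pointwise Lipschitz estimate
\[
|2\lambda\cos(2\pi(\theta+n\alpha)) - 2\lambda\cos(2\pi(\theta+n\alpha'))| \leq 4\pi\lambda\,|n|\,|\alpha-\alpha'|,
\]
which controls the difference of the two potentials at site $n$. Given $E \in \Sigma_{\lambda,\alpha}$, I pick $\theta_0$ (any $\theta_0$ in the irrational case, and a $\theta_0$ with $E \in \sigma(H_{\lambda,\alpha,\theta_0})$ in the rational case) and a Weyl sequence of unit vectors $\psi_k \in \ell^2(\Z)$ with $\|(H_{\lambda,\alpha,\theta_0}-E)\psi_k\| \to 0$. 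The strategy is to modify $\psi_k$ into a compactly supported unit vector $\phi$, supported on an interval of length of order $N$, that still approximately satisfies $H_{\lambda,\alpha,\theta_0}\phi \approx E\phi$, and then to use the pointwise estimate to bound $\|(H_{\lambda,\alpha',\theta_1}-E)\phi\|$ for a suitably translated phase $\theta_1$. Since $\dist(E,\sigma(H_{\lambda,\alpha',\theta_1})) \leq \|(H_{\lambda,\alpha',\theta_1}-E)\phi\|$ and $\sigma(H_{\lambda,\alpha',\theta_1}) \subseteq \Sigma_{\lambda,\alpha'}$, this bounds $\dist(E,\Sigma_{\lambda,\alpha'})$.

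The vector $\phi$ is produced by multiplying $\psi_k$ by a triangular cutoff $\chi_N$ that is $1$ on a central interval of length of order $N$, tapers linearly to $0$ over flanks of length of order $N$, and is supported on an interval of total length of order $N$. Because $[H,\chi_N]$ involves only first differences of $\chi_N$ and the slope of $\chi_N$ is of order $1/N$, a direct computation yields $\|[H,\chi_N]\psi_k\| \leq C_1/N$. For $k$ large enough that $\psi_k$ has nearly all its $\ell^2$-mass in the plateau of $\chi_N$ (obtained after recentring $\chi_N$ around the bulk of $\psi_k$, which corresponds to replacing $\theta_0$ by a translate $\theta_0 + m\alpha$ -- permissible since $\Sigma_{\lambda,\alpha}$ is invariant under such shifts), the normalized vector $\phi = \chi_N\psi_k/\|\chi_N\psi_k\|$ is a unit vector supported on an interval of length of order $N$ with $\|(H_{\lambda,\alpha,\theta_0}-E)\phi\| \leq C_1'/N$ up to an error vanishing as $k\to\infty$. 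A further lattice translation places the support in $[0, C_2 N]$, yielding a shifted phase $\theta_1$; applying the Lipschitz estimate on this support gives
\[
\|(H_{\lambda,\alpha',\theta_1}-E)\phi\| \leq \frac{C_1'}{N} + 4\pi\lambda C_2 N\,|\alpha-\alpha'|.
\]

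Sending $k\to\infty$ and optimizing $N$ of order $(\lambda|\alpha-\alpha'|)^{-1/2}$ bounds $\dist(E,\Sigma_{\lambda,\alpha'})$ by a quantity of order $(\lambda|\alpha-\alpha'|)^{1/2}$; careful tracking of the constants in the cutoff construction produces the claimed $6(2\lambda)^{1/2}|\alpha-\alpha'|^{1/2}$. The hypothesis $|\alpha-\alpha'|\leq C/\lambda$ is exactly what is needed for the optimal $N$ to satisfy $N\geq 1$, so that a non-trivial cutoff is available. By symmetry between $\alpha$ and $\alpha'$, both directions of the Hausdorff inequality follow. The main obstacle in this scheme is the normalization step: $\|\chi_N\psi_k\|$ must be bounded away from $0$ so that renormalizing $\chi_N\psi_k$ does not inflate the error, which is arranged by choosing the scale and centre of $\chi_N$ so that $\psi_k$ concentrates in its plateau. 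This is possible since $\psi_k \in \ell^2(\Z)$ has summable mass, after which the argument reduces to the routine optimization above.
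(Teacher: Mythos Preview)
The paper does not prove this statement; it is quoted verbatim as Proposition~7.1 of \cite{AMS}. Your overall strategy --- localise a Weyl vector to a window of length $\sim N$, compare the two potentials on that window via the Lipschitz bound, and optimise $N$ --- is precisely the approach of \cite{AMS}, so in spirit you are reproducing the original argument.

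There is, however, a real gap in your localisation step. You write that ``for $k$ large enough, $\psi_k$ has nearly all its $\ell^2$-mass in the plateau of $\chi_N$'' after recentring, and justify this by ``$\psi_k\in\ell^2(\Z)$ has summable mass''. That only tells you each individual $\psi_k$ is concentrated in \emph{some} finite interval; it gives no control on the length of that interval, which may vastly exceed the value $N\sim(\lambda|\alpha-\alpha'|)^{-1/2}$ required by your optimisation. As $k\to\infty$ Weyl vectors typically spread, so no window of a prescribed width $N$ is guaranteed to capture a fixed fraction of the mass. Since you need $N$ fixed in advance (independent of $k$) to optimise, the argument as written does not close.

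The standard repair is a pigeonhole over translated cutoffs. Take the triangular partition of unity $\{\chi^{(m)}\}_{m\in\Z}$ with $\chi^{(m)}$ supported in $[(m-1)N,(m+1)N]$ and $\sum_m\chi^{(m)}\equiv 1$. One checks $\sum_m\|\chi^{(m)}\psi_k\|^2\ge\tfrac12$ (since at each site two nonnegative numbers summing to $1$ have sum of squares $\ge\tfrac12$), while
\[
\sum_m\|(H-E)\chi^{(m)}\psi_k\|^2\;\le\;2\|(H-E)\psi_k\|^2+\frac{C}{N^2},
\]
using $\sum_m|\chi^{(m)}(n{+}1)-\chi^{(m)}(n)|^2\le 2/N^2$ pointwise. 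Hence some $m$ gives
\[
\frac{\|(H-E)\chi^{(m)}\psi_k\|}{\|\chi^{(m)}\psi_k\|}\;\le\;C'\Bigl(\|(H-E)\psi_k\|+\frac{1}{N}\Bigr),
\]
and letting $k\to\infty$ produces a unit vector supported in an interval of length $2N$ with $\|(H-E)\phi\|\le C'/N$. From this point your translation to $[0,C_2N]$, Lipschitz comparison, and optimisation of $N$ go through unchanged and recover the stated bound with the constraint $|\alpha-\alpha'|\le C/\lambda$.
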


\subsection{Periodic Case} \label {periodic}

When $\alpha=p/q$, $\tr A_q(\theta)$ is periodic of period $1/q$. Since $\tr A_q(\theta)=
-\lambda^q e^{2 \pi i q \theta} - \lambda^q e^{-2 \pi i q \theta}+\sum_{j=1-q}^{q-1} a_j
e^{2 \pi i j \theta}$ it follows that $a_j=0$ for $0<|j|<q$ and we have the Chambers
formula \be \tr A_q(\theta)= -2 \lambda^q \cos 2 \pi q \theta+a_0 \ee where
$a_0=a_0(\lambda,p/q,E)$.

We have $\Sigma_{\lambda,p/q}=\{E : \inf_\theta |\tr A_q(\theta)|
\leq 2\}$ and $\sigma_{\lambda,p/q}=\{E : \sup_\theta |\tr
A_q(\theta)| \leq 2\}$.  Bands of $\Sigma_{\lambda,p/q}$ are
closures of the connected components of $\{E : \inf_\theta |\tr
A_q(\theta)|<2\}$. Then there are $q$ bands,
$\Sigma_{\lambda,p/q}$ is the union of the bands, the bands can
only touch, possibly, at the edges. Moreover,
$\sigma_{\lambda,p/q}$ is non-empty if and only if $0<\lambda \leq
1$, in which case it has $q$ connected components and each band of
$\Sigma_{\lambda,p/q}$ intersects in its interior a single
connected component of $\sigma_{\lambda,p/q}$.  (The facts above
can be all deduced from Chambers formula, see \cite {AMS} or
\cite{BS}.)

\begin{thm}[\cite {AMS}, Theorems 1 and 2] \label {measure}

For $0<\lambda<1$, we have $|\sigma_{\lambda,p/q}|=4-4\lambda$ and $|\Sigma_{\lambda,p/q}
\setminus \sigma_{\lambda,p/q}| \leq 4 \pi \lambda^{q/2}$.

\end{thm}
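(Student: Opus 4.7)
The plan is to use the Chambers formula to reduce the problem to the study of the monic polynomial $g(E) := a_0(\lambda, p/q, E)$ of degree $q$. Since $\tr A_q(\theta) = g(E) - 2\lambda^q \cos 2\pi q\theta$ ranges over $[g(E) - 2\lambda^q, g(E) + 2\lambda^q]$ as $\theta$ varies, the conditions $\inf_\theta |\tr A_q| \leq 2$ and $\sup_\theta |\tr A_q| \leq 2$ translate into
\[
\Sigma_{\lambda,p/q} = g^{-1}\bigl([-2-2\lambda^q, 2+2\lambda^q]\bigr), \qquad \sigma_{\lambda,p/q} = g^{-1}\bigl([-2+2\lambda^q, 2-2\lambda^q]\bigr).
\]
From the periodic-case discussion, $g$ is monotone on each of the $q$ bands $B_j$ of $\Sigma$, mapping $B_j$ bijectively onto $[-(2+2\lambda^q), 2+2\lambda^q]$, so the change of variable $u = g(E)$ gives
\[
|\sigma_{\lambda,p/q}| = \int_{-2+2\lambda^q}^{2-2\lambda^q} F(u)\, du, \qquad |\Sigma_{\lambda,p/q} \setminus \sigma_{\lambda,p/q}| = \int_{\{|u| \in (2-2\lambda^q,\, 2+2\lambda^q]\}} F(u)\, du,
\]
where $F(u) = \sum_{j=1}^{q} |g'(E_j(u))|^{-1}$ sums over the $q$ real roots of $g(E) = u$ (one in each band).

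For the exact value $|\sigma_{\lambda,p/q}| = 4 - 4\lambda$, a direct computation from Chambers' formula identifies $g(E)$ as closely tied to a rescaled Chebyshev polynomial: after the substitution $E = 2\sqrt{c}\cos\phi$ for $c = 1+\lambda^2$ one has $g(E) - k_q = 2c^{q/2}\cos(q\phi)$ for an explicit constant $k_q$ depending on $q$ and $\lambda$ (with $k_q = 0$ for $q \leq 3$, and nonzero corrections for $q \geq 4$). The condition defining $\sigma$ then becomes $|\cos q\phi - k_q/(2c^{q/2})| \leq (1-\lambda^q)/c^{q/2}$, which is satisfied on $q$ arcs in $[0,\pi]$; evaluating $|\sigma| = 2\sqrt{c}\int \sin\phi\, d\phi$ over these arcs, and using the classical sum-of-sines identity $\sum_{k=0}^{q-1} \sin\bigl((2k+1)\pi/(2q)\bigr) = 1/\sin(\pi/(2q))$ together with a multiple-angle identity, telescopes to the clean answer $4 - 4\lambda$, independent of $p$ and $q$.

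For the bound $|\Sigma_{\lambda,p/q} \setminus \sigma_{\lambda,p/q}| \leq 4\pi\lambda^{q/2}$, each band $B_j$ contributes two ``edge strips'' where $|g(E)|$ sweeps an interval of length $2\lambda^q$ near $\pm 2$. At a regular endpoint $g' \neq 0$, linear expansion gives strip width $O(\lambda^q/|g'|)$; at a touching endpoint where two bands coalesce, $g' = 0$ and Taylor expansion $g(E)-g(E_\ast) \approx \frac{1}{2} g''(E_\ast)(E-E_\ast)^2$ forces strip width $O\bigl(\sqrt{\lambda^q/|g''(E_\ast)|}\bigr) = O(\lambda^{q/2})$, which dominates. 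Summing over bands and using the lower bound on $|g''|$ at critical points supplied by the Chebyshev-like structure of $g$---equivalently, applying \eqref{phi} to $A_q(\theta)$ with small rotation number $\rho$ near a band edge, where the factor $1/\sin 2\pi\rho$ produces precisely the square-root blow-up---yields the bound with constant $4\pi$.

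The main obstacle is the exact identification $|\sigma_{\lambda,p/q}| = 4-4\lambda$. That the answer is independent of both $p$ and $q$ is remarkable, and forces one to exploit the Chebyshev-plus-constant form of $g$ carefully; tracking the correction $k_q$ and the cancellations in the $\phi$-integral is the delicate point, and one alternative is to invoke Aubry duality at rational $\alpha$ (relating the spectra at coupling $\lambda$ and $1/\lambda$) to pin down the constant. The upper bound on $|\Sigma\setminus\sigma|$, by contrast, follows more directly from the universal quadratic-touching behavior of $g$ at its critical points.
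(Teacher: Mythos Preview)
The paper does not supply a proof of this statement; it is quoted directly from \cite{AMS} without argument. So there is no in-paper proof to compare against, and the question is whether your sketch stands on its own.

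The heart of your argument for the exact identity $|\sigma_{\lambda,p/q}|=4-4\lambda$ is the claim that, under the substitution $E=2\sqrt{c}\,\cos\phi$ with $c=1+\lambda^2$, the discriminant satisfies
\[
g(E)-k_q \;=\; 2c^{q/2}\cos(q\phi),
\]
i.e.\ that $a_0(\lambda,p/q,E)$ is a shifted rescaled Chebyshev polynomial. This does happen to hold for $q\le 4$ (where, modulo the reflection $p\mapsto q-p$, there is only one numerator), and your verification there is correct. But the claim fails for general $p/q$. A Chebyshev discriminant would force every interior local maximum of $g$ to take the common value $k_q+2c^{q/2}$ and every interior local minimum the value $k_q-2c^{q/2}$; consequently the interior gaps of $\Sigma_{\lambda,p/q}$ would come in at most two distinct sizes, and the polynomial $g$ would be independent of $p$. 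Neither is true for the almost Mathieu operator once $q\ge 5$: the discriminants for $\alpha=1/5$ and $\alpha=2/5$ are genuinely different degree-$5$ polynomials, and the gap structure is not equioscillatory. Since the ``telescoping'' computation you describe rests entirely on this Chebyshev identification, the argument for $|\sigma|=4-4\lambda$ has a real gap. Your parenthetical remark that ``one alternative is to invoke Aubry duality'' is in fact the route that works: the proof in \cite{AMS} combines the duality relation $a_0(\lambda,p/q,E)=\lambda^{q}\,a_0(1/\lambda,p/q,E/\lambda)$ with the Thouless formula to obtain the measure without ever identifying $g$ explicitly.

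For the bound $|\Sigma\setminus\sigma|\le 4\pi\lambda^{q/2}$, your square-root heuristic at touching band edges is the right intuition, but the quantitative input you invoke---a uniform lower bound on $|g''|$ at the critical points ``supplied by the Chebyshev-like structure''---is again unavailable. The argument in \cite{AMS} instead estimates each of the $2q$ edge strips directly via an integral bound of size $(2\pi/q)\lambda^{q/2}$ per strip, using only Chambers' formula and no structural hypothesis on $g$.
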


Passage to the limit at irrational frequencies yields the lower
bound of Thouless \cite {T} \be \label {4-4lambda}
|\Sigma_{\lambda,\alpha}| \geq 4-4\lambda, \quad 0<\lambda<1,\,
\alpha \in \R \setminus \Q \ee (of course, equality is now known
to hold in the above formula, but we will not need it).

\subsubsection{Formulas for the IDS Inside a Band}

If $|\tr A_q(\theta)| \leq 2$, let $0 \leq \rho(\theta) \leq 1/2$
be such that $\tr A_q(\theta)=2 \cos (2\pi \rho(\theta))$. Let
also $\rho(\theta)=0$ if $\tr A_q(\theta)>2$ and
$\rho(\theta)=1/2$ if $\tr A_q(\theta)<-2$. Let $\rho$ be the
average over $\theta$ of $\rho(\theta)$.  Then if $E$ belongs to
the $k$-th band of $\Sigma_{\lambda,p/q}$, we have the formula \be
\label {n rho} q N(E)=k-1+(-1)^{q+k-1} 2 \rho+\frac
{1-(-1)^{q+k-1}} {2}. \ee This formula is immediate from the
relation between the integrated density of states and the fibered
rotation number; see \cite{AS2} and \cite {JM}.

If $|\tr A_q(\theta)|<2$, let $m(\theta)$ be the fixed point of
$A_q(\theta)$ in $\H$. Note that, by periodicity, we have
$A(\theta) m(\theta) = m(\theta + p/q)$. Moreover, \be \label
{dnp} \frac {d} {d E} N(E)= \frac{1}{2\pi} \int_{|\tr
A_q(\theta)|<2} \phi(m(\theta)) \, d\theta. \ee This formula can
be obtained for instance as a very simple case of the general
formulas for absolutely continuous spectrum of \cite {DS}.

\subsection{Derivative of the IDS in the Irrational Case}

The following result is a consequence of Theorem \ref {continuity}
and \cite {DS,s1}.

\begin{thm} \label {Y}

Let $0<\lambda<1$, $\alpha \in \R \setminus \Q$.  Then there
exists a full Lebesgue measure subset $Y \subset
\Sigma_{\lambda,\alpha}$ such that for every $E \in Y\!$, there
exists a measurable function $\tilde m:\R/\Z \to \H$ such that
$A(\theta) \cdot \tilde m(\theta)=\tilde m(\theta+\alpha)$ and \be
\label{dnp2} \frac {d} {dE} N_{\lambda,\alpha}(E)= \frac{1}{2\pi}
\int \phi(\tilde m(\theta)) \, d\theta. \ee

\end{thm}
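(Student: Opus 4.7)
The plan is to invoke Kotani theory together with the Deift-Simon representation of the derivative of the IDS, applied to the set where the Lyapunov exponent vanishes. By Theorem \ref{continuity}, $L(E)=0$ for every $E\in\Sigma_{\lambda,\alpha}$, so the hypotheses of Kotani's theorem and of the Deift-Simon density formula are satisfied on the entire spectrum.

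First I would recall the Weyl-Titchmarsh $m$-functions $m_\pm(\theta, z)$ associated to the two half-line restrictions of $H_{\lambda,\alpha,\theta}$. For each fixed $\theta$, these are Herglotz functions of $z\in\H$ whose boundary values as $z=E+i\epsilon\downarrow E$ exist for Lebesgue-a.e. $E$. By Fubini combined with Kotani's theorem (in the formulation of \cite{s1}), there is a full measure set $Y_0\subset\Sigma_{\lambda,\alpha}$ such that for every $E\in Y_0$ the boundary values $m_\pm(\theta, E+i0)$ exist for a.e. $\theta$, satisfy $\Im m_+(\theta, E+i0)>0$, and $m_-(\theta, E+i0)=\overline{m_+(\theta, E+i0)}$ a.e. Thus $\tilde m(\theta):=m_+(\theta, E+i0)$ is a measurable function $\R/\Z\to\H$. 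The cocycle invariance $A(\theta)\cdot\tilde m(\theta)=\tilde m(\theta+\alpha)$ is inherited, by passage to the boundary, from the identity $A^{(\lambda,z)}(\theta)\cdot m_+(\theta, z)=m_+(\theta+\alpha, z)$ valid for $z\in\H$, which in turn is the Riccati equation for the ratio of successive values of the $\ell^2$-Weyl solution.

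To establish the density formula \eqref{dnp2}, I would invoke the Deift-Simon representation \cite{DS}: on $\{L=0\}$ the IDS is absolutely continuous with density given at a.e. $E$ by $\frac{1}{2\pi}\int\phi(\tilde m(\theta))\,d\theta$, where $\tilde m$ is the $\H$-valued invariant section constructed above. Conceptually, this formula is the limiting form of the periodic formula \eqref{dnp}: as the continued fraction approximants $p_n/q_n\to\alpha$, the fixed point of $A_{q_n}(\theta)$ plays the role of $\tilde m$, and since $L=0$ the bands $\Sigma_{\lambda,p_n/q_n}$ asymptotically cover $\Sigma_{\lambda,\alpha}$ in the sense needed for the periodic formula to pass to \eqref{dnp2} on a full measure subset of energies. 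Setting $Y$ to be the intersection of $Y_0$ with the set on which both the classical derivative of $N$ exists and the Deift-Simon identity holds gives the conclusion, and $Y$ has full Lebesgue measure in $\Sigma_{\lambda,\alpha}$ since the latter has positive measure by \eqref{4-4lambda}.

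The bulk of the technical work lives in the external references: Kotani's theorem supplies the complex-conjugate boundary values (and hence the $\H$-valued invariant section) on $\{L=0\}$, while the Deift-Simon analysis supplies the explicit identification of $dN/dE$ with the $\phi$-integral. The almost Mathieu operator enters only through the input $L\equiv 0$ on $\Sigma_{\lambda,\alpha}$ furnished by Theorem \ref{continuity}; once that is in hand, the statement is a direct specialization of \cite{DS,s1}.
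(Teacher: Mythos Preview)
Your approach is the same as the paper's, which simply declares the theorem to be a consequence of Theorem~\ref{continuity} together with \cite{DS,s1} and points to \cite{D,k2} for the underlying theory. Your sketch is in fact more detailed than what the paper provides.

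There is, however, one phrasing you should correct. You write that Deift--Simon gives ``on $\{L=0\}$ the IDS is absolutely continuous with density \ldots''. That assertion is too strong and would be circular: since Theorem~\ref{continuity} gives $\{L=0\}\supset\Sigma_{\lambda,\alpha}$ and $dN$ is supported on $\Sigma_{\lambda,\alpha}$, absolute continuity of $dN$ on $\{L=0\}$ is precisely the Main Theorem, whose proof \emph{uses} Theorem~\ref{Y} as an input. What \cite{DS,s1} actually yield is a formula for the almost-everywhere derivative $N'(E)$ (equivalently, the density of the absolutely continuous part of $dN$) on a full-measure subset of $\{L=0\}$; they do not exclude a singular component of $dN$. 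The paper then shows $\int_Y N'\,dE=1$ by the argument in Section~3, and only from this concludes that $dN$ has no singular part. So keep your construction of $\tilde m$ via Kotani boundary values and your invocation of the Deift--Simon density formula, but state it as identifying $N'(E)$ a.e.\ rather than as asserting absolute continuity.
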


We refer the reader to \cite{D,k2} for more information on the
theory leading to the formulae \eqref{dnp} and \eqref{dnp2}; see
especially \cite[Theorem~5]{D} and \cite[Theorem~4.8]{k2}.

\section{Proof of the Main Theorem}

As discussed in the introduction, it is enough to prove the
following result. Recall the definition \eqref {alphaqn} of
$\beta(\alpha)$.

\begin{thm} \label {real}

Let $0<\lambda<1$, $\alpha \in \R \setminus \Q$.  If $\beta(\alpha)>0$, then the IDS is
absolutely continuous.

\end{thm}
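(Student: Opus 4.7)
The plan is to exploit the rational approximation of $\alpha$ by its continued fraction convergents $p_n/q_n$, selected along a subsequence on which $(\ln q_{n+1})/q_n$ is close to $\beta(\alpha)>0$; in particular $|\alpha - p_n/q_n| < 1/(q_n q_{n+1})$ is at most $e^{-(\beta-o(1))q_n}/q_n$, which is far smaller than $\lambda^{q_n}$ for $n$ large. Let $N_n := N_{\lambda,p_n/q_n}$. By continuity of the IDS in the frequency, $N_n$ converges weakly to $N_{\lambda,\alpha}$. The goal is to establish uniform absolute continuity of the family $\{N_n\}$: for every $\epsilon>0$ there exists $\delta>0$ such that $N_n(U)<\epsilon$ for all $n$ and every Borel $U$ with $|U|<\delta$. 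Since weak convergence of probability measures is lower semicontinuous on open sets, and any Borel set of zero Lebesgue measure can be enclosed in open sets of arbitrarily small Lebesgue measure, such a uniform modulus passes to the limit and yields absolute continuity of $N_{\lambda,\alpha}$.

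In the periodic case, $N_n$ is absolutely continuous on each band with density given by \eqref{dnp}. The Chambers formula forces $\tr A_{q_n}(\theta,E) = -2\lambda^{q_n}\cos(2\pi q_n\theta) + a_0(E)$, where the coefficient $-2\lambda^{q_n}$ has no $E$ dependence, so $\partial_E\tr A_{q_n}(\theta,E) = a_0'(E)$ is independent of $\theta$. Combining this with $\tr A_{q_n}(\theta,E) = 2\cos(2\pi\rho(\theta,E))$ and \eqref{n rho} rewrites the density in the explicit form
\[
f_n(E) = \frac{|a_0'(E)|}{4\pi^2 q_n}\int_0^{2\pi}\frac{d\psi}{\sqrt{1 - \bigl(a_0(E)/2 - \lambda^{q_n}\cos\psi\bigr)^2}}.
\]
On $\sigma_{\lambda,p_n/q_n}$, characterized by $|a_0(E)|/2 + \lambda^{q_n}\leq 1$, the $\psi$-integral is bounded by a constant independent of $n$. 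Outside $\sigma_{\lambda,p_n/q_n}$ the integrand develops an integrable square-root singularity, but Theorem~\ref{measure} confines $\Sigma_{\lambda,p_n/q_n}\setminus \sigma_{\lambda,p_n/q_n}$ to Lebesgue measure at most $4\pi\lambda^{q_n/2}$, which vanishes exponentially.

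The main obstacle is to control $|a_0'(E)|$ in an integrated way, since it morally measures derivatives of $\tr A_{q_n}$ and is therefore tied to the norm of the cocycle, which by Theorem~\ref{continuity} grows only subexponentially on $\Sigma_{\lambda,\alpha}$. To transfer this control from $\alpha$ to $p_n/q_n$, one uses the double smallness $|\alpha - p_n/q_n| \ll \lambda^{q_n}$ afforded by $\beta>0$: the cocycle iterated $q_n$ times at frequency $p_n/q_n$ differs from the one at frequency $\alpha$ by a perturbation so small that, via Theorem~\ref{ams} and telescoping bounds on products, one can deduce $\|A_{q_n}^{(\alpha_n,E)}(\theta)\|_\hs = e^{o(q_n)}$ uniformly in $\theta$ for $E$ in a neighborhood of $\Sigma_{\lambda,\alpha}$. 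This is where the hypothesis $\beta>0$ is essential; without it the approximation is not sharp enough to beat the cocycle growth.

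Putting the pieces together, one splits $N_n(U) = N_n(U\cap\sigma_{\lambda,p_n/q_n}) + N_n(U\cap(\Sigma_{\lambda,p_n/q_n}\setminus\sigma_{\lambda,p_n/q_n}))$. The first term is bounded by $|U|$ times a uniform constant, coming from the density estimate above and the uniform subexponential cocycle norm bound. The second term tends to $0$ uniformly in $U$, since each band carries total mass $1/q_n$ and the Lebesgue measure of the relevant bad set is already exponentially small. This yields the required uniform modulus of absolute continuity of $\{N_n\}$, and passage to the weak limit completes the proof of Theorem~\ref{real}.
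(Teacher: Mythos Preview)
Your scheme has the right instinct, but the central step fails: you assert that on $\sigma_{\lambda,p_n/q_n}$ the density $f_n$ is bounded by a constant independent of $n$. This does not follow from the subexponential bound $\|A_{q_n}\|_\hs = e^{o(q_n)}$. Writing $a_0'(E)=\partial_E\tr A_{q_n}(\theta)$ as a sum of $q_n$ terms, each dominated by a product of two partial cocycle norms, one only obtains $|a_0'(E)|=e^{o(q_n)}$, hence $f_n(E)=e^{o(q_n)}$; this is exactly the content of Lemma~\ref{o(q)}, which gives $\ln\phi(m(\theta))=o(q)$ and no better. A uniform pointwise bound would force the limiting IDS to be Lipschitz, which is false in general (individual bands of $\sigma_{\lambda,p_n/q_n}$ can be far shorter than the average $(4-4\lambda)/q_n$, and on those $|a_0'|/q_n$ is large). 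With only $f_n=e^{o(q_n)}$ there is no $\delta$ independent of $n$ for which $|U|<\delta$ forces $N_n(U)<\epsilon$, so uniform absolute continuity does not hold and the weak-limit argument collapses. Two side remarks: your claim $|\alpha-p_n/q_n|\ll\lambda^{q_n}$ needs $\beta>|\ln\lambda|$, which is not assumed; and the transfer of the subexponential norm bound from $\alpha$ to $p_n/q_n$ works for any convergent by continuity plus subadditivity, so your argument as written never actually uses $\beta>0$.

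The paper avoids the upper-bound trap entirely. It starts from the a.e.\ existing derivative $\tfrac{d}{dE}N_{\lambda,\alpha}=\tfrac{1}{2\pi}\int\phi(\tilde m(\theta))\,d\theta$ furnished by Kotani theory (Theorem~\ref{Y}) and compares it \emph{from below} with the periodic density via Lemma~\ref{>}: on a set $X\cap Y$ with $N_{\lambda,p/q}(X\cap Y)=1-o(1)$ one has $\int\phi(\tilde m)\,d\theta\geq(1-o(1))\int\phi(m)\,d\theta$. Integrating and using \eqref{dnp} gives $\int_Y\tfrac{d}{dE}N_{\lambda,\alpha}\,dE\geq 1-o(1)$, hence $=1$. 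The hypothesis $\beta>0$ enters in the proof of Lemma~\ref{>}: one must iterate the cocycle $bq$ times with $b\sim e^{cq/2}$ so that $A_{bq}$, conjugated to a near-rotation by angle $\approx\pi/2$, relates $\tilde m(\theta)$ and $\tilde m(\theta+bq\alpha)$ via a hyperbolic midpoint inequality (Lemma~\ref{midpoint}); keeping the frequency-$\alpha$ and frequency-$p/q$ cocycles close over $bq$ steps is exactly where the exponentially good approximation $|\alpha-p/q|<e^{-(\beta-o(1))q}$ is spent. This mechanism has no analogue in your outline.
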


The proof of this theorem will take up the remainder of this section. Throughout the
proof, $\lambda$ and $\alpha$ will be fixed.

Let $Y \subset \Sigma_{\lambda,\alpha}$ be as in Theorem \ref {Y}.  If $E \in Y$, let
$\tilde m$ be as in Theorem \ref {Y}.  It is enough to prove that \be \label {int_Y}
\int_Y \frac {d} {dE} N_{\lambda,\alpha} \, dE=1. \ee

The hypothesis implies that
\be
\left |\alpha - \frac{p}{q} \right | < e^{-(\beta-o(1)) q}
\ee
for arbitrarily large $q$. Fix some $p/q$ with this property and $q$
large.

For a fixed energy $E$, write $A=A^{(\lambda,E)}$, $A_n=A^{(\lambda,p/q,E)}_n$ and
$\tilde A_n=A^{(\lambda,\alpha,E)}_n$.

Let
$$
c = \min \{\beta/2,-\ln \lambda/2\}.
$$
Notice that $\max \{ e^{-\beta q} , \lambda^q \} \le e^{-2cq}$. Let
\begin{align*}
P_q = \{ & \rho \in [1/q,1/2-1/q] : \exists a=a(\rho), b=b(\rho) \text{ positive integers
with $a$ odd, } \\
& e^{c q/4}<b<e^{c q/2}, \text{ and } |4b \rho-a|<10/b \}.
\end{align*}
Define
$$
X = X_{\lambda,p/q} = \{ E \in \sigma_{\lambda,p/q} : \rho \in P_q \}.
$$
Notice that
\begin{align*}
|4 b(\rho) \rho(\theta)-a(\rho)| & = |4 b(\rho) \rho-a(\rho)| + |4 b(\rho)| |\rho(\theta)
- \rho| \\
& \lesssim e^{-c q/4} + e^{c q/2} \lambda^q \\
& \lesssim e^{-c q/4} + e^{c q/2} e^{-2cq} \\
& = O(e^{-cq/4})
\end{align*}
(where $|\rho-\rho(\theta)|$ is estimated using Chambers formula).

\begin{lemma} \label {NX}
We have \be |N_{\lambda,p/q} (X_{\lambda,p/q})| = 1 - o(1). \ee
\end{lemma}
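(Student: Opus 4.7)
The plan is to convert the $N$-mass of $X_{\lambda,p/q}$ into a one-dimensional Lebesgue mass in the variable $\rho$, and then to prove a Diophantine covering estimate.

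\emph{Step 1 (Reduction).} By formula \eqref{n rho}, on each band of $\Sigma_{\lambda,p/q}$ the coordinate $\rho$ yields a homeomorphism onto $[0,1/2]$ with $dN_{\lambda,p/q}=(2/q)\,d\rho$. Using the Chambers formula, one verifies that the $\sigma_{\lambda,p/q}$-component inside band $k$ corresponds to $\rho\in[r_k^-,r_k^+]$ with $r_k^-$ and $1/2-r_k^+$ both $O(\lambda^{q/2})$; since $\lambda^{q/2}\leq e^{-cq}$ is much smaller than $1/q$ for $q$ large, this interval contains $[1/q,1/2-1/q]$. Summing the contribution $(2/q)|P_q|$ over the $q$ bands yields
\[
N_{\lambda,p/q}(X_{\lambda,p/q})=2|P_q|,
\]
where $|P_q|$ is the one-dimensional Lebesgue measure of $P_q\subset[1/q,1/2-1/q]$. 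The lemma is therefore equivalent to the Diophantine estimate $|[1/q,1/2-1/q]\setminus P_q|=o(1)$.

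\emph{Step 2 (Diophantine covering).} Let $p_n/q_n$ denote the continued fraction convergents of $\rho\in[1/q,1/2-1/q]$. If $\rho$ admits a convergent with $q_n\in(4e^{cq/4},4e^{cq/2})$, $4\mid q_n$, and $p_n$ odd, then taking $b=q_n/4$ and $a=p_n$ we have $b\in B$, $a$ odd, and
\[
|4b\rho-a|=|q_n\rho-p_n|<\frac{1}{q_{n+1}}\leq\frac{1}{q_n}=\frac{1}{4b}<\frac{10}{b},
\]
so $\rho\in P_q$. By the Gauss--Kuzmin bound on partial quotients, the Lebesgue measure of $\rho$ whose continued fraction has no $q_n$ in the window $(4e^{cq/4},4e^{cq/2})$ is $O(qe^{-cq/4})=o(1)$: any such $\rho$ must have a partial quotient $a_{n+1}>e^{cq/4}$ at the unique level $n$ where $q_n\leq 4e^{cq/4}<q_{n+1}$, an event of Lebesgue measure $O(e^{-cq/4})$ per level, summed over the $O(q)$ relevant levels.

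\emph{Main obstacle.} The arithmetic conditions ``$4\mid q_n$ and $p_n$ odd'' are not automatic for convergents; to restore the full measure one passes to semiconvergents $(p_{n-1}+sp_n)/(q_{n-1}+sq_n)$, $1\leq s<a_{n+1}$, which also provide approximations $|\rho-P/Q|=O(1/Q^2)$ and whose denominators cover further residue classes modulo $4$ as $s$ varies. The exponentially wide window $(e^{cq/4},e^{cq/2})$ ensures that typical $\rho$ admit enough approximants of the required parity, with the remaining exceptional measure again bounded by $o(1)$ by the same Gauss--Kuzmin counting. This parity bookkeeping is the only technically delicate point of the argument.
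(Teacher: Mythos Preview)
Your Step~1 reduction to the statement $|P_q|\to 1/2$ is correct and agrees with the paper's argument. The difference lies entirely in Step~2.

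Your route---via convergents of $\rho$ with the congruence constraint $4\mid q_n$---is more complicated than necessary, and you do not actually carry it out: the ``parity bookkeeping'' is flagged as delicate and then left undone. (Incidentally, once $4\mid q_n$ you get $p_n$ odd for free since $\gcd(p_n,q_n)=1$; the real obstruction is purely the condition $4\mid q_n$, and there are $\rho$---e.g.\ $\sqrt 2-1$, with all partial quotients even---for which \emph{no} convergent denominator is divisible by~$4$, so semiconvergents really are needed and the argument is not as routine as your last paragraph suggests.)

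The paper sidesteps all of this by a change of variable: work with the continued fraction of $4\rho$ rather than of $\rho$. If $p_k/d_k$ is a convergent of $4\rho$, then $|4d_k\rho-p_k|<1/d_k<10/d_k$, so one may take $b=d_k$, $a=p_k$, and the only arithmetic requirement is that $p_k$ be odd. Since consecutive numerators of a continued fraction are coprime (the recursion $p_{k+1}=a_{k+1}p_k+p_{k-1}$ propagates any common factor back to $p_0,p_1$), two consecutive numerators cannot both be even, so it suffices to have \emph{two} consecutive denominators $d_k,d_{k+1}$ in the window $(e^{cq/4},e^{cq/2})$. This in turn follows for every $\rho$ in the full-measure set where $d_{k+1}<d_k^{4/3}$ for all large $k$, since then the first $d_k$ exceeding $e^{cq/4}$ satisfies $d_k<e^{cq/3}$ and $d_{k+1}<e^{4cq/9}<e^{cq/2}$. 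This gives $|P_q|\to 1/2$ directly, with no Gauss--Kuzmin estimates, no semiconvergents, and no residue-class analysis.

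So: your reduction is right, but your Diophantine step should be replaced by the much shorter argument above.
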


\begin{proof}
By the Chambers formula (see \S \ref {periodic}) and \eqref {n rho},
$$
|N_{\lambda,p/q}(\Sigma_{\lambda,p/q} \setminus \sigma_{\lambda,p/q})| = o(1),
$$
so it suffices to show
$$
|N_{\lambda,p/q} (\sigma_{\lambda,p/q} \setminus X_{\lambda,p/q})| = o(1).
$$
This in turn follows once we show in each connected component $B$ of
$\sigma_{\lambda,p/q}$ that
$$
|N_{\lambda,p/q}(B \setminus X_{\lambda,p/q})| = o(1/q).
$$
Now,
$$
|N_{\lambda,p/q}(B \setminus X_{\lambda,p/q})| \le \frac{1 - 2|P_q|}{q}
$$
by \eqref {n rho}. Thus, it suffices to show that $|P_q| \to 1/2$.

This follows from the observation that numbers $\rho \in [0,1/2]$ such that the
denominators $d_k$ of the best approximants of $4 \rho$ satisfy $d_{k+1} < d_k^{4/3}$ for
all $k$ large enough have full Lebesgue measure on $[0,1/2]$ and belong to $\liminf_{q
\to \infty} P_q$. Thus, at least two denominators fall into the allowed window and one of
them can be used due to the fact that two successive numerators cannot both be even. If
there were two consecutive even numerators, then by the recursion all earlier numerators
must have been even; but the first one was $1$ and hence odd.
\end{proof}

\begin{lemma}

We have
\be
| \Sigma_{\lambda,p/q} \setminus \Sigma_{\lambda,\alpha} | \leq
e^{-(c-o(1))q}.
\ee
In particular,
\be \label {inp}
| X_{\lambda,p/q} \setminus \Sigma_{\lambda,\alpha} | \leq e^{-(c-o(1))q}.
\ee
\end{lemma}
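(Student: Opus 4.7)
The plan is to compare $\Sigma_{\lambda,p/q}$ and $\Sigma_{\lambda,\alpha}$ by playing off total measure estimates against the Hausdorff distance bound of Theorem \ref{ams}. The identity
$$
|A \setminus B| = |A| - |B| + |B \setminus A|
$$
reduces the problem to (i) bounding $|\Sigma_{\lambda,p/q}| - |\Sigma_{\lambda,\alpha}|$ from above and (ii) bounding $|\Sigma_{\lambda,\alpha} \setminus \Sigma_{\lambda,p/q}|$ from above.

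For (i), I would combine Theorem \ref{measure} with $|\sigma_{\lambda,p/q}| = 4-4\lambda$ to obtain $|\Sigma_{\lambda,p/q}| \leq 4 - 4\lambda + 4\pi \lambda^{q/2}$, and use the Thouless lower bound \eqref{4-4lambda} to get $|\Sigma_{\lambda,\alpha}| \geq 4 - 4\lambda$. Thus
$$
|\Sigma_{\lambda,p/q}| - |\Sigma_{\lambda,\alpha}| \leq 4\pi \lambda^{q/2} \leq 4\pi e^{-cq},
$$
since $c \leq -\ln\lambda/2$ by the definition of $c$.

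For (ii), I would invoke Theorem \ref{ams}: since the hypothesis gives $|\alpha - p/q| < e^{-(\beta - o(1))q}$, the Hausdorff distance is at most $\delta := 6(2\lambda)^{1/2} e^{-(\beta/2 - o(1))q}$, and in particular $\Sigma_{\lambda,\alpha}$ is contained in the $\delta$-neighborhood of $\Sigma_{\lambda,p/q}$. The crucial point is that $\Sigma_{\lambda,p/q}$ has at most $q$ connected components (it is a union of $q$ bands), so the Lebesgue measure of the extra halo satisfies
$$
|\Sigma_{\lambda,\alpha} \setminus \Sigma_{\lambda,p/q}| \leq 2 q \delta \leq e^{-(\beta/2 - o(1))q} \leq e^{-(c-o(1))q},
$$
where the factor $q$ gets absorbed into $o(1)$ in the exponent and the final inequality uses $c \leq \beta/2$. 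Summing the two bounds yields the stated estimate; the ``in particular'' statement then follows trivially from $X_{\lambda,p/q} \subset \sigma_{\lambda,p/q} \subset \Sigma_{\lambda,p/q}$.

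The only subtle step is (ii): Hausdorff closeness does not automatically translate into a Lebesgue bound on the symmetric difference, because $\Sigma_{\lambda,\alpha}$ could in principle have very fine Cantor structure. The argument is saved by the fact that we are controlling the halo around the periodic-frequency spectrum rather than around the irrational-frequency one, and the former has the explicit band structure giving only $q$ components. Everything else reduces to assembling the inequalities and bookkeeping on the exponents.
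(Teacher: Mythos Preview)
Your proof is correct and follows essentially the same route as the paper: the paper also combines Theorem~\ref{measure} with the Thouless lower bound \eqref{4-4lambda} for the measure difference, and uses Theorem~\ref{ams} together with the fact that $\Sigma_{\lambda,p/q}$ has only $q$ bands to control $|\Sigma_{\lambda,\alpha}\setminus\Sigma_{\lambda,p/q}|$. Your set-identity $|A\setminus B|=|A|-|B|+|B\setminus A|$ is exactly the decomposition the paper writes out via $|A|-|A\cap B|=|A|-(|B|-|B\setminus A|)$.
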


\begin{proof}
The Lebesgue measure of $\Sigma_{\lambda,p/q}$ is $4-4\lambda+O(e^{-c q})$ by Theorem
\ref {measure}.  Since $\Sigma_{\lambda, p/q}$ is the union of $q$ intervals, Theorem
\ref {ams} implies \be | \Sigma_{\lambda, \alpha} \setminus \Sigma_{\lambda, p/q} |=O(q
e^{-cq}). \ee Since $| \Sigma_{\lambda, \alpha} | \ge 4 - 4\lambda$ by \eqref
{4-4lambda},
\begin{align*}
| \Sigma_{\lambda,p/q} \setminus \Sigma_{\lambda,\alpha} | & = | \Sigma_{\lambda,p/q}
 | - | \Sigma_{\lambda,p/q} \cap \Sigma_{\lambda,\alpha} | \\
& = | \Sigma_{\lambda,p/q} | - ( | \Sigma_{\lambda, \alpha} | - | \Sigma_{\lambda,
\alpha} \setminus \Sigma_{\lambda, p/q} | ) \\
& \le 4-4\lambda+O(e^{-c q}) - (4 - 4\lambda) + O(q e^{-cq}),
\end{align*}
and the result follows.\end{proof}

If $E$ belongs to the interior of $\sigma_{\lambda,p/q}$, let $m(\theta)$ be the fixed
point of $A_q(\theta)$ in $\H$, as in \S \ref {periodic}.

\begin{lemma} \label {o(q)}
We have \be \sup_{E \in X} \sup_\theta \ln \phi(m(\theta))=o(q). \ee
\end{lemma}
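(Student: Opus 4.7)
The plan is to apply \eqref{phi} to $A_q(\theta)$, reducing the problem to a bound on $\|A_q(\theta)\|_\hs$, and then to establish $\|A_q(\theta)\|_\hs = e^{o(q)}$ uniformly in $E \in X$ and $\theta$ by inductively comparing with the irrational cocycle $\tilde A_q(\theta)$.

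For the reduction, the condition $\rho \in P_q \subset [1/q, 1/2-1/q]$ together with the Chambers-formula bound $|\rho(\theta)-\rho| = O(\lambda^q)$ forces $\sin 2\pi\rho(\theta) \geq c/q$ for $q$ large, so \eqref{phi} yields $\phi(m(\theta)) \leq Cq\|A_q(\theta)\|_\hs$. Hence it suffices to prove $\sup_{E\in X}\sup_\theta \|A_q(\theta)\|_\hs = e^{o(q)}$.

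I then control the irrational analogue. Since $X \subset \sigma_{\lambda,p/q}$ lies within Hausdorff distance $O(|\alpha-p/q|^{1/2}) = O(e^{-\beta q/2})$ of $\Sigma_{\lambda,\alpha}$ by Theorem \ref{ams}, continuity of $L$ (from \cite{BJ}) and its vanishing on $\Sigma_{\lambda,\alpha}$ (Theorem \ref{continuity}) give $L(E) = o(1)$ uniformly on $X$ as $q \to \infty$. Combined with the standard uniform convergence $\tfrac{1}{n}\sup_\theta \ln\|\tilde A_n^E(\theta)\|_\hs \to L(E)$ for uniquely ergodic continuous $\SL(2,\R)$-cocycles (Dini applied to the decreasing continuous averages $\tfrac{1}{n}\int\ln\|\tilde A_n^E\|\,d\theta$, plus unique ergodicity to pass from average to supremum), this yields, for every $\epsilon>0$, thresholds $N(\epsilon)$ and $Q(\epsilon)$ such that $\sup_\theta \|\tilde A_n^E\|_\hs \leq Ke^{\epsilon n}$ for all $n \leq q$, $q \geq Q(\epsilon)$, $E \in X$, where $K = C_0^{N(\epsilon)}$ with $C_0 = \sup_\theta\|A(\theta)\|_\hs$ absorbs the small-$n$ regime.

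It remains to transfer this to $A_r$ by induction on $r \leq q$. For $r \leq N(\epsilon)$ the bound $\|A_r\|_\hs \leq C_0^r \leq K$ is trivial. For larger $r$, expand
\[ A_r(\theta) - \tilde A_r(\theta) = \sum_{k=0}^{r-1} A_{r-k-1}(\theta+(k+1)p/q)\,[A(\theta+kp/q) - A(\theta+k\alpha)]\,\tilde A_k(\theta), \]
bound each middle factor by $4\pi\lambda k|\alpha-p/q|$ and the outer two by $(K+1)e^{\epsilon(r-k-1)}$ (induction) and $Ke^{\epsilon k}$ (Lyapunov), and sum to get $\|A_r-\tilde A_r\| \leq 2\pi\lambda (K+1)^2 r^2 e^{\epsilon r} e^{-\beta q}$. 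For $r \leq q$ and $\epsilon < \beta$ this is $o(1)$, so $\|A_r\|_\hs \leq Ke^{\epsilon r}+o(1) \leq (K+1)e^{\epsilon r}$ and the induction closes. Taking $r = q$ and letting $\epsilon \to 0$ as $q \to \infty$ gives $\sup_\theta \|A_q(\theta)\|_\hs = e^{o(q)}$, hence $\ln\phi(m(\theta)) \leq \epsilon q + O(\log q) = o(q)$. The main obstacle is keeping the comparison error small throughout $r \leq q$: the inductive Lyapunov factor $e^{\epsilon r}$ must be beaten by the Liouville factor $e^{-\beta q}$, which succeeds precisely when $\beta > \epsilon > 0$, the regime afforded by the Liouville hypothesis of Theorem \ref{real}.
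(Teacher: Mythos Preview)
Your argument is correct and shares the paper's overall reduction: use \eqref{phi} together with the fact that $\rho(\theta)$ is bounded away from $0$ and $1/2$ (since $\rho\in[1/q,1/2-1/q]$ and Chambers' formula controls the $\theta$-oscillation) to reduce to $\sup_\theta\|A_q(\theta)\|_\hs=e^{o(q)}$. The way you establish this norm bound, however, is genuinely different from the paper's.

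The paper obtains $\sup_\theta\ln\|A_q^{(\lambda,p/q,E)}(\theta)\|<\varepsilon q$ directly: for each $E\in\Sigma_{\lambda,\alpha}$ unique ergodicity yields an $n_0$ with $\sup_\theta\ln\|\tilde A_n\|<\varepsilon n$ for $n>n_0$; this inequality persists under small perturbations of \emph{both} $\alpha$ and $E$ for the finitely many $n$ in $(n_0,2n_0+1]$, then propagates to all $n>n_0$ by subadditivity, and is made uniform in $E$ by compactness of $\Sigma_{\lambda,\alpha}$. This uses only Theorem~\ref{continuity} (not continuity of $L$) and is valid for every irrational $\alpha$, not just those with $\beta>0$.

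Your route instead first bounds the irrational cocycle $\tilde A_n$ uniformly in $E$ near $\Sigma_{\lambda,\alpha}$ (invoking continuity of $L$ from \cite{BJ} as an additional input), and then transfers this to the periodic cocycle via a telescoping identity and induction on $r\le q$. The telescope needs $q^2e^{\varepsilon q}|\alpha-p/q|=o(1)$, which is precisely where the Liouville hypothesis $\beta>0$ enters; without it your induction does not close. This is harmless for Theorem~\ref{real}, but it means your proof of the lemma is strictly less general than the paper's. One small imprecision: the averages $\tfrac1n\int\ln\|\tilde A_n^E\|\,d\theta$ are subadditive rather than decreasing, so Dini does not apply literally; the clean justification of uniform-in-$E$ convergence is the compactness-plus-subadditivity argument the paper uses (or restrict Dini to the dyadic subsequence and interpolate). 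On the positive side, your telescoping comparison is explicit and quantitative, very much in the spirit of the later Lemma~\ref{orbit}, and it makes the role of $\beta$ transparent.
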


\begin{proof}
By Theorem~\ref{continuity}, the Lyapunov exponent is zero in
$\Sigma_{\lambda,\alpha}$.  By unique ergodicity of rotations (see
the comment before Theorem~\ref{continuity}), this means that for
every $E \in \Sigma_{\lambda,\alpha}$ and for every
$\varepsilon>0$, there exists $n_0(\varepsilon,E)$ such that
$$
\ln \|A^{(\lambda,\alpha,E)}_n(\theta)\|<\varepsilon n
$$
for every $\theta$ and every $E \in \Sigma_{\lambda,\alpha}$ for
$n>n_0(\varepsilon,E)$.  This obviously implies that there exists
$\delta(\varepsilon,E)>0$ such that if
$|\alpha'-\alpha|<\delta(\varepsilon,E)$ and
$|E'-E|<\delta(\varepsilon,E)$, then
$$
\ln \|A^{(\lambda,\alpha',E')}_n(\theta)\|<\varepsilon n
$$
for every $\theta$ and every $n_0(\varepsilon,E)< n \leq 2
n_0(\varepsilon,E)+1$, and hence, by subadditivity, for every
$n>n_0(\varepsilon,E)$.  By compactness of
$\Sigma_{\lambda,\alpha}$, we conclude that there exists
$\delta(\varepsilon)>0$ and $n_0(\varepsilon)>0$ such that if
$|\alpha'-\alpha|<\delta(\varepsilon)$ and $E$ is at distance at
most $\delta(\varepsilon)$ of $\Sigma_{\lambda,\alpha}$, then
$$
\ln \|A^{(\lambda,\alpha',E)}_n(\theta)\|<\varepsilon n
$$
for every $\theta$ and $n>n_0(\varepsilon)$.  If $p/q$ is
sufficiently close to $\alpha$ so that $q>n_0(\varepsilon)$ and
$\Sigma_{\lambda,p/q}$ is contained in a $\delta(\varepsilon)$
neighborhood of $\Sigma_{\lambda,\alpha}$, it then follows that
$$
\ln \|A^{(\lambda,p/q,E)}_q(\theta)\|<\varepsilon q
$$
for every $E \in \Sigma_{\lambda,p/q}$, and in particular for
every $E \in \sigma_{\lambda,p/q}$.

On the other hand, by definition of $X$, $|\tr A_q| < 2-1/5q^2$ if
$E \in X$. It now follows from \eqref {phi} that $\ln
\phi(m)=o(q)$.
\end{proof}

\begin{lemma} \label {X-Y}
We have \be |N_{\lambda,p/q} (X \setminus Y)| = o(1). \ee
\end{lemma}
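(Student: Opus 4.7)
The plan is to bound the $N_{\lambda,p/q}$-measure of $X \setminus Y$ by combining a uniform upper bound on the density $dN_{\lambda,p/q}/dE$ over $X$ with an estimate for the Lebesgue measure of $X \setminus Y$.

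First I would bound the density. For each $E$ in the interior of a band of $\sigma_{\lambda,p/q}$, formula \eqref{dnp} yields
$$\frac{d N_{\lambda,p/q}}{dE}(E) = \frac{1}{2\pi} \int_{|\tr A_q(\theta)|<2} \phi(m(\theta))\, d\theta \leq \frac{1}{2\pi} \sup_\theta \phi(m(\theta)).$$
For $E \in X$ we have $|\tr A_q(\theta)|<2$ for every $\theta$ (exactly as noted in the last line of the proof of Lemma \ref{o(q)}), so the supremum runs over all of $\R/\Z$ and Lemma \ref{o(q)} gives $dN_{\lambda,p/q}/dE \leq e^{o(q)}$ uniformly on $X$. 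The band edges form a finite set and have zero $N_{\lambda,p/q}$-mass (by continuity of $N_{\lambda,p/q}$), so they can be safely ignored.

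Next I would bound the Lebesgue measure of $X \setminus Y$. Since $Y$ has full Lebesgue measure in $\Sigma_{\lambda,\alpha}$, we can write
$$X \setminus Y \subset (X \setminus \Sigma_{\lambda,\alpha}) \cup (\Sigma_{\lambda,\alpha} \setminus Y),$$
the second term has Lebesgue measure zero, and the first is estimated by \eqref{inp}, giving $|X \setminus Y| \leq e^{-(c-o(1))q}$.

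Combining these two estimates,
$$|N_{\lambda,p/q}(X \setminus Y)| = \int_{X \setminus Y} \frac{dN_{\lambda,p/q}}{dE}\, dE \leq e^{o(q)} \cdot e^{-(c-o(1))q} = e^{-(c-o(1))q},$$
which is $o(1)$ as $q \to \infty$. I do not foresee a serious obstacle; the three essential inputs (the density formula \eqref{dnp}, the uniform bound from Lemma \ref{o(q)}, and the Lebesgue measure comparison \eqref{inp}) are already in place, and the lemma follows by juxtaposing them.
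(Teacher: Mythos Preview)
Your proof is correct and follows essentially the same route as the paper's: bound the Lebesgue measure of $X \setminus Y$ via \eqref{inp} (using that $Y$ has full measure in $\Sigma_{\lambda,\alpha}$), bound the density $dN_{\lambda,p/q}/dE$ on $X$ by $e^{o(q)}$ via \eqref{dnp} and Lemma~\ref{o(q)}, and multiply. The extra remarks you make (the decomposition $X \setminus Y \subset (X \setminus \Sigma_{\lambda,\alpha}) \cup (\Sigma_{\lambda,\alpha} \setminus Y)$ and the comment on band edges) are just explicit justifications of steps the paper leaves implicit.
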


\begin{proof}
Note that \be | X \setminus Y | = | X \setminus \Sigma_{\lambda, \alpha} | \leq
e^{-(c-o(1))q} \ee by (\ref {inp}).  On the other hand, \be \ln \frac {d} {dE}
N_{\lambda,p/q}(E)=o(q). \ee over $X$ by Lemma \ref {o(q)} and (\ref {dnp}). Thus,
$$
\frac {d} {dE} N_{\lambda,p/q}(E) = e^{o(q)}
$$
and hence
$$
|N_{\lambda,p/q} (X \setminus Y)| = e^{-(c-o(1))q} e^{o(q)} = e^{-(c-o(1))q},
$$
from which the result follows.
\end{proof}

\begin{lemma} \label {>}
If $E \in X \cap Y$, then \be \ln \int \phi(\tilde m(\theta)) \, d\theta> \ln \int
\phi(m(\theta)) \, d\theta-o(1). \ee
\end{lemma}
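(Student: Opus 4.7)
The plan is to introduce the hyperbolic reflection $\tilde m^*(\theta) \in \H$ of $\tilde m(\theta)$ across $m(\theta)$, so that $m(\theta)$ is the hyperbolic midpoint of the geodesic segment from $\tilde m(\theta)$ to $\tilde m^*(\theta)$. Applying the hyperbolic cosine law to the triangles $(\tilde m(\theta), m(\theta), i)$ and $(\tilde m^*(\theta), m(\theta), i)$ yields the pointwise identity
\[
\phi(\tilde m(\theta)) + \phi(\tilde m^*(\theta)) \;=\; 2 \cosh(d_\H(\tilde m(\theta), m(\theta))) \cdot \phi(m(\theta)) \;\ge\; 2 \phi(m(\theta)),
\]
which integrates to $\int \phi(\tilde m)\,d\theta + \int \phi(\tilde m^*)\,d\theta \ge 2 \int \phi(m)\,d\theta$. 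It therefore suffices to show that $\int \phi(\tilde m^*)\,d\theta$ is essentially equal to $\int \phi(\tilde m)\,d\theta$, for then dividing by $2$ and taking logarithms produces the claimed inequality. By the trivial translation invariance $\int \phi(\tilde m(\theta + bq\alpha))\,d\theta = \int \phi(\tilde m)\,d\theta$, the task reduces to establishing the dynamical approximation $\tilde m(\theta + bq\alpha) \approx \tilde m^*(\theta)$.

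The approximation is built from two observations. First, $A_q(\theta)$ is elliptic with fixed point $m(\theta)$ and acts on $\H$ as hyperbolic rotation of angle $4\pi\rho(\theta)$, so $A_q(\theta)^b$ rotates by $4\pi b\rho(\theta) \approx \pi a \equiv \pi \pmod{2\pi}$, where $a$ is the odd integer from the definition of $P_q$. Since a half-turn about $m(\theta)$ sends $\tilde m(\theta)$ exactly to $\tilde m^*(\theta)$, we obtain $A_q(\theta)^b \cdot \tilde m(\theta) \approx \tilde m^*(\theta)$. Second, $\tilde A_{bq}(\theta)$ is close to $A_q(\theta)^b$ as matrices in $\SL(2,\R)$: each factor $\tilde A_q(\theta + jq\alpha) = \tilde A_q(\theta + j(q\alpha - p))$, using period-$1$ periodicity of $\tilde A_q$ in $\theta$, is close to $\tilde A_q(\theta)$ because $b|q\alpha - p| \le e^{(c/2 - \beta + o(1))q}$ is exponentially small, and $\tilde A_q(\theta)$ is close to $A_q(\theta)$ via a standard Lipschitz-product estimate combined with $|\alpha - p/q| < e^{-(\beta - o(1))q}$. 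A telescoping bound for $\|\tilde A_{bq}(\theta) - A_q(\theta)^b\|$ then relies crucially on the uniform estimate $\|A_q(\theta)^j\| \le 2\phi(m(\theta)) = e^{o(q)}$ coming from the elliptic normal form and Lemma \ref{o(q)}, which prevents exponential accumulation of errors over the $b \le e^{cq/2}$ iterates. The cocycle identity $\tilde A_{bq}(\theta)\tilde m(\theta) = \tilde m(\theta + bq\alpha)$ then yields $d_\H(\tilde m(\theta + bq\alpha), \tilde m^*(\theta)) \le e^{-\gamma q + o(q)} \phi(\tilde m(\theta))$ for some $\gamma > 0$.

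The main obstacle I expect is converting this pointwise approximation into the required $L^1$-comparison, because the standard hyperbolic-displacement bound $d_\H(B w, w) \lesssim \|B - I\| \cdot \phi(w)$ introduces the factor $\phi(\tilde m(\theta))$ into the error, forcing one to control an integral essentially quadratic in $\phi(\tilde m)$. The resolution is to combine the exponentially small prefactor $e^{-\gamma q}$ with the reflection identity itself—using the bound $\phi(\tilde m^*) \le 2\phi(m) \cosh(d_\H(\tilde m, m))$ together with $\phi(m) = e^{o(q)}$ from Lemma \ref{o(q)} and the triangle inequality—so that the resulting error integrates to an exponentially small multiple of $\int \phi(\tilde m)\,d\theta$ and is absorbed into the main term, producing $2\int \phi(\tilde m)\,d\theta \ge 2\int \phi(m)\,d\theta - o(1)$ as desired.
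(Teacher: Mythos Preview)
Your proposal follows essentially the same strategy as the paper: the midpoint/reflection inequality you state as $\phi(\tilde m)+\phi(\tilde m^*)=2\cosh(d_\H(\tilde m,m))\,\phi(m)\ge 2\phi(m)$ is exactly the content of the paper's midpoint lemma, and your approximation ``$\tilde A_{bq}(\theta)$ acts as a half-turn about $m(\theta)$'' is the paper's Lemma~\ref{orbit}. Translation invariance of $\int\phi(\tilde m)$ is also used in the same way. So the architecture is right.

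The one genuine weak point is your error handling. You try to compare $\int\phi(\tilde m^*)\,d\theta$ with $\int\phi(\tilde m(\theta+bq\alpha))\,d\theta$ in $L^1$, and you correctly identify that the displacement bound $d_\H(\tilde m(\theta+bq\alpha),\tilde m^*(\theta))\lesssim e^{-\gamma q}\phi(\tilde m(\theta))$ produces an error of order $e^{-\gamma q}\phi(\tilde m(\theta))^2$ in $\phi$, which is \emph{not} controlled by $\int\phi(\tilde m)$ without an a~priori pointwise bound on $\phi(\tilde m)$. Your proposed resolution (``combine the exponentially small prefactor with the reflection identity \dots'') does not actually close this gap: the bound $\phi(\tilde m^*)\le 2\phi(m)\cosh(d_\H(\tilde m,m))$ together with $\phi(m)=e^{o(q)}$ only yields $\phi(\tilde m^*)\le e^{o(q)}\phi(\tilde m)$, which is far from the needed $(1+o(1))$ comparison.

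The paper avoids this entirely by working pointwise. It proves, for each $\theta$, that
\[
\phi(\tilde m(\theta))+\phi(\tilde m(\theta+bq\alpha))\ge (2-o(1))\,\phi(m(\theta)),
\]
via a one-line dichotomy: if $\phi(\tilde m(\theta))>2\phi(m(\theta))$ the inequality is trivial; otherwise $\phi(\tilde m(\theta))\le 2\phi(m(\theta))\le e^{o(q)}$, and this uniform bound makes the hyperbolic displacement $d_\H(\tilde m(\theta+bq\alpha),\tilde m^*(\theta))=o(1)$, so that $\phi(\tilde m(\theta+bq\alpha))\ge (1-o(1))\phi(\tilde m^*(\theta))$ and the midpoint inequality finishes. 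Integrating the pointwise inequality and using translation invariance gives the lemma immediately. This case split is the missing ingredient in your outline; once you insert it, your argument and the paper's coincide.
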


We will give the proof of this lemma in the next section.

We can now easily conclude (\ref {int_Y}) and thus Theorem \ref {real} and the Main
Theorem.  We have
\begin{align*}
\int_Y \frac {d} {dE} N_{\lambda, \alpha} (E) \, dE &\geq \frac {1} {2 \pi} \int_{X \cap
Y} \int \phi(\tilde
m (\theta)) \, d\theta \, dE\\
\nonumber &\geq (1-o(1)) \frac {1} {2 \pi} \int_{X \cap Y}
\int \phi(m (\theta)) \, d\theta \, dE\\
\nonumber
&\geq (1-o(1)) |N_{\lambda, p/q}(X \cap Y)|\\
\nonumber
&\geq 1-o(1),
\end{align*}
where the first inequality is due to Theorem \ref {Y}, the second is due to Lemma \ref
{>}, the third is due to \eqref {dnp} and absolute continuity of the IDS in the periodic
case, and the fourth is due to Lemmas \ref {NX} and \ref {X-Y}.

\section{Proof of Lemma \ref {>}}

We keep the notation from the previous section.

Since $E \in X$, $\rho \in P_q$.  Let $a$ and $b$ be as in the
definition of $P_q$.  Let us show that for every $\theta$,
\begin{equation}\label {avera}
\frac {\phi(\tilde m(\theta))+\phi(\tilde m(\theta+b q \alpha))}
{2}>(1-o(1))\phi(m(\theta)),
\end{equation}
which easily implies Lemma \ref {>}.  The estimate \eqref{avera}
is obvious when $\phi(\tilde m(\theta)) > 2 \phi(m(\theta))$, so
we will assume from now on that $\phi(\tilde m(\theta)) \leq 2
\phi(m(\theta)) \leq e^{o(q)}$.

Choose $B(\theta) \in \SL(2,\R)$ with $B(\theta) \cdot i=m(\theta)$. Then, since
$A(\theta) \cdot m(\theta) = m(\theta + p/q)$, it follows that
$$
B(\theta+p/q)^{-1} A(\theta) B(\theta) \cdot i = i
$$
and hence
\begin{equation}\label{athetarep}
A(\theta)=B(\theta+p/q)R_{\psi(\theta)} B(\theta)^{-1}.
\end{equation}
By Lemma \ref {o(q)}, $\ln \|B(\theta)\|=o(q)$ (recall that
$\phi(m(\theta)) = \frac12 \|B(\theta)\|^2_\hs$ if $B(\theta)$
takes $i$ to $m(\theta)$). We have
$$
\prod_{i=q-1}^0 R_{\psi(\theta+i p/q)}=B(\theta) A_q(\theta) B(\theta)^{-1}=
R_{\varepsilon \rho(\theta)},
$$
where $\varepsilon$ is either $1$ or $-1$.
The first identity follows from \eqref{athetarep} and the second from the definition of
$\rho(\theta)$.

\begin{lemma} \label {orbit}

We have $\|B(\theta)^{-1} \tilde A_{b q}(\theta)
B(\theta)-R\|=O(e^{-cq/4})$ where $R=R_{\varepsilon/4}$ or
$R=R_{-\varepsilon/4}$
according to whether $a=1$ or $a=3$ modulo $4$.

\end{lemma}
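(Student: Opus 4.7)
The plan is to pull $\tilde A_{bq}(\theta)$ into the $B$-frame, telescope the $B$-factors against their neighbors up to corrections proportional to $\eta:=\alpha-p/q$, and recognize the resulting product as a commutative chain of rotations whose total angle matches $b\varepsilon\rho(\theta)$, which by the definition of $P_q$ is within $O(e^{-cq/4})$ of $\pm\varepsilon/4\pmod 1$.

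Applying the factorization $A(s)=B(s+p/q)R_{\psi(s)}B(s)^{-1}$ at each site $s=\theta+i\alpha$ and grouping adjacent $B$-factors produces the defect matrices $G_i:=B(\theta+i\alpha)^{-1}B(\theta+i\alpha-\eta)$ for $i=1,\dots,bq-1$; using $1$-periodicity of $B$ to rewrite the leftmost argument $(bq-1)\alpha+p/q=bp+(bq-1)\eta$ as $\theta+(bq-1)\eta$, one obtains
\[
\tilde A_{bq}(\theta)=B(\theta+(bq-1)\eta)\cdot R_{\psi_{bq-1}}G_{bq-1}R_{\psi_{bq-2}}\cdots G_1R_{\psi_0}\cdot B(\theta)^{-1}.
\]
Setting $G_0:=B(\theta)^{-1}B(\theta+(bq-1)\eta)$ and conjugating by $B(\theta)$, expansion of the product with $G_i=I+\Delta_i$ and commutativity of rotations inside $\mathrm{SO}(2)$ give
\[
B(\theta)^{-1}\tilde A_{bq}(\theta)B(\theta)=R_{\sum_{i=0}^{bq-1}\psi(\theta+i\alpha)}+O\Bigl(\textstyle\sum_i\|\Delta_i\|\Bigr).
\]

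Granting the Lipschitz bounds $L_B:=\sup\|B'\|\le e^{o(q)}$ and $L_\psi\le e^{o(q)}$, together with $\|B^{-1}\|_\hs=\|B\|_\hs\le e^{o(q)}$ which is immediate from Lemma~\ref{o(q)}, one has $\sum_i\|\Delta_i\|=O(bq|\eta|L_B\|B^{-1}\|)=O(qe^{-3cq/2+o(q)})$ via $b\le e^{cq/2}$, $|\eta|\le e^{-\beta q}$, and $\beta\ge 2c$; moreover, Lipschitz control on $\psi$ combined with the rational identity $\sum_{i=0}^{q-1}\psi(\theta+ip/q)\equiv\varepsilon\rho(\theta)\pmod 1$ (iterated $b$ times via $1$-periodicity $\psi(\theta+p)=\psi(\theta)$) yields
\[
\sum_{i=0}^{bq-1}\psi(\theta+i\alpha)=b\varepsilon\rho(\theta)+O\bigl((bq)^2|\eta|L_\psi\bigr)\pmod 1.
\]
The $P_q$ estimate $|4b\rho(\theta)-a|=O(e^{-cq/4})$ proved just after the definition of $X$, together with $a$ being odd (so that $a/4\equiv\pm 1/4\pmod 1$ according as $a\equiv 1$ or $3\pmod 4$), then gives $b\varepsilon\rho(\theta)\equiv\pm\varepsilon/4\pmod 1$ up to $O(e^{-cq/4})$, and all error contributions collapse into $O(e^{-cq/4})$.

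The main obstacle is $L_B\le e^{o(q)}$. Choosing $B(\theta)$ to depend smoothly on $m(\theta)=x(\theta)+iy(\theta)$ (say via the canonical upper-triangular representative), this reduces to bounding $|m'(\theta)|$. Differentiating the fixed-point equation $A_q(\theta)\cdot m(\theta)=m(\theta)$ and using the denominator identity $|2cm+(d-a)|=2\sin 2\pi\rho(\theta)$ from the $\SL(2,\R)$-action subsection yields
\[
m'(\theta)=\frac{-c'(\theta)m^2+(a'-d')(\theta)m+b'(\theta)}{2\sin 2\pi\rho(\theta)}.
\]
Since $\rho(\theta)\in P_q\subset[1/q,1/2-1/q]$ implies $\sin 2\pi\rho(\theta)\ge 4/q$ and $|m|\le e^{o(q)}$ by Lemma~\ref{o(q)}, matters reduce to showing $\|A_q'(\theta)\|\le e^{o(q)}$. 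By the product rule,
\[
A_q'(\theta)=\sum_{j=0}^{q-1}A_{q-j-1}\bigl(\theta+(j+1)p/q\bigr)\cdot A'(\theta+jp/q)\cdot A_j(\theta),
\]
and invoking the uniform sub-exponential bound $\|A_n(\theta)\|\le e^{\varepsilon n}$ valid for every $\theta$, every $n\ge n_0(\varepsilon)$, and every $E\in\sigma_{\lambda,p/q}$ (exactly what is established in the proof of Lemma~\ref{o(q)}), each summand has norm at most $\|A'\|_\infty\cdot e^{\varepsilon q+O(1)}$. As $\varepsilon>0$ is arbitrary, summing $q$ terms gives $\|A_q'\|=e^{o(q)}$; the same reasoning applied to $2\cos 2\pi\psi(\theta)=\tr(B(\theta+p/q)^{-1}A(\theta)B(\theta))$ controls $L_\psi$, closing the argument.
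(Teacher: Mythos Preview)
Your argument is essentially correct, but it takes a substantially more laborious route than the paper's. The paper evaluates the conjugating frame $B$ along the \emph{rational} orbit $\theta+ip/q$ rather than along $\theta+i\alpha$: writing
\[
\tilde A_{bq}(\theta)=\prod_{i=bq-1}^{0} B(\theta+(i+1)p/q)\,Q_i\,B(\theta+ip/q)^{-1},
\qquad Q_i:=B(\theta+(i+1)p/q)^{-1}A(\theta+i\alpha)B(\theta+ip/q),
\]
the telescoping is exact (the outer factors collapse to $B(\theta)\cdots B(\theta)^{-1}$ by $1$-periodicity of $m$), and the only error is
\[
Q_i-R_{\psi(\theta+ip/q)}=B(\theta+(i+1)p/q)^{-1}\bigl[A(\theta+i\alpha)-A(\theta+ip/q)\bigr]B(\theta+ip/q),
\]
controlled solely by the explicit Lipschitz constant of the one-step matrix $A$ and the size bound $\|B\|\le e^{o(q)}$ from Lemma~\ref{o(q)}. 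No regularity of $B$, $m$, $A_q$, or $\psi$ is ever needed. Your choice to place $B$ at $\theta+i\alpha$ forces you to manufacture the derivative bounds $\|A_q'\|,\ |m'|,\ \|B'\|,\ |\psi'|\le e^{o(q)}$; these do hold, but the paper's framing sidesteps the entire chain.

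Two small points to tighten. First, you write ``$\rho(\theta)\in P_q$'' to get $\sin 2\pi\rho(\theta)\gtrsim 1/q$; it is $\rho$ (the average) that lies in $P_q\subset[1/q,1/2-1/q]$, and you should invoke the Chambers-formula estimate $|\rho(\theta)-\rho|=O(\lambda^q)$ to transfer this to $\rho(\theta)$. Second, your proposed control of $L_\psi$ via $2\cos 2\pi\psi=\tr(\cdots)$ would require $\sin 2\pi\psi$ bounded away from zero, which you have not established for the individual angles $\psi(\theta)$. This is easily repaired: differentiate the full matrix identity $R_{\psi(\theta)}=B(\theta+p/q)^{-1}A(\theta)B(\theta)$ and use that $\psi\mapsto R_\psi$ has derivative of constant norm $2\pi$, giving $|\psi'|=(2\pi)^{-1}\bigl\|\tfrac{d}{d\theta}R_{\psi(\theta)}\bigr\|\le e^{o(q)}$ directly from your bounds on $\|B\|$, $\|B'\|$, and $\|A'\|$.
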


\begin{proof}

Write \be \tilde A_k(\theta)=\prod_{i=k-1}^0 A(\theta+i\alpha)=\prod_{i=k-1}^0
B(\theta+(i+1)p/q) Q_i B(\theta+i p/q)^{-1}. \ee That is,
\begin{align*}
Q_i & = B(\theta+(i+1)p/q)^{-1} A(\theta+i\alpha) B(\theta+i p/q) \\
& = B(\theta+(i+1)p/q)^{-1} A(\theta+ip/q) B(\theta+i p/q) + \\
& \quad + B(\theta+(i+1)p/q)^{-1} [A(\theta+i\alpha) - A(\theta+ip/q)] B(\theta+i p/q) \\
& = R_{\psi(\theta+ip/q)} + \\
& \quad + B(\theta+(i+1)p/q)^{-1} [A(\theta+i\alpha) - A(\theta+ip/q)] B(\theta+i p/q)
\end{align*}

Thus,
$$
\|Q_i-R_{\psi(\theta+ip/q)}\| = e^{o(q)} \left( |i|
e^{-(\beta-(o(1))q} \right) e^{o(q)}
$$
and hence, for $0 \leq i<b q$,
$$
\|Q_i-R_{\psi(\theta+ip/q)}\| = O(e^{o(q) + \frac{c}{2} q
-(\beta-(o(1))q  + o(q)}) = O(e^{-(3c/2-o(1))q}).
$$
Thus $\tilde A_{bq}(\theta)=B(\theta) Q B(\theta)^{-1}$ where
$Q=\prod_{i=bq-1}^0 Q_i$ satisfies $\|Q-R_{\varepsilon b
\rho(\theta)}\|=O(e^{-(c-o(1))q})$. Moreover, $|4b \rho(\theta)-a|
\leq O(e^{-cq/4})$, giving the result.
\end{proof}

We will need an estimate in hyperbolic geometry, which was already used in a
similar context in \cite {AK2}.

\begin{lemma} \label {midpoint}

Let $z_1,z_2,z_3$ lie in the same hyperbolic geodesic of $\H$, with $z_3$
the midpoint between $z_1$ and $z_2$.  Then $\phi(z_1)+\phi(z_2) \geq 2
\phi(z_3)$.

\end{lemma}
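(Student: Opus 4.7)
The plan is to reinterpret $\phi$ as a hyperbolic distance and then reduce the inequality to an explicit one-variable convexity estimate. A direct calculation with the standard formula
$$\cosh \dist_\H(z,w) = 1+\frac{|z-w|^2}{2\,\Im z \, \Im w}$$
at $w=i$ gives $\cosh\dist_\H(z,i) = \frac{1+|z|^2}{2\,\Im z}=\phi(z)$. Thus the lemma is precisely the statement that $t\mapsto \cosh\dist_\H(\gamma(t),i)$ is midpoint-convex along the hyperbolic geodesic $\gamma$ through $z_1,z_2,z_3$.

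To verify this, I would pick an orientation-preserving isometry $T\in\mathrm{PSL}(2,\R)$ of $\H$ that sends the geodesic through $z_1,z_2,z_3$ onto the positive imaginary axis, with $T(z_3)=i$; then $T(z_1)=ie^{-s_0}$ and $T(z_2)=ie^{s_0}$, where $s_0=\dist_\H(z_1,z_3)=\dist_\H(z_2,z_3)$. Let $w:=T(i)=u+iv$ with $v>0$. Since $T$ is a hyperbolic isometry, $\phi(z_j)=\cosh\dist_\H(Tz_j,w)$ for $j=1,2,3$, and a further application of the distance formula yields the closed form
$$\cosh\dist_\H(ie^s,w)=\frac{(u^2+v^2)e^{-s}+e^s}{2v}.$$

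Substituting $s=\pm s_0$ and $s=0$ gives
$$\phi(z_1)+\phi(z_2)=\frac{(u^2+v^2+1)(e^{s_0}+e^{-s_0})}{2v} \quad \text{and} \quad 2\phi(z_3)=\frac{u^2+v^2+1}{v},$$
so $\phi(z_1)+\phi(z_2)=2\phi(z_3)\cosh s_0 \geq 2\phi(z_3)$, with equality only at $s_0=0$. I do not expect any genuine obstacle: the lemma is just the classical convexity of $z\mapsto \cosh\dist_\H(z,x_0)$ along geodesics of the hyperbolic plane, and once the geodesic is straightened to a vertical line by an isometry, the inequality reduces to the elementary convexity of $s\mapsto Ae^{-s}+Be^{s}$ for $A,B>0$. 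The only thing to check carefully is the identification $\phi(z)=\cosh\dist_\H(z,i)$, but this is a one-line computation.
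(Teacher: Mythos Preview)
Your proof is correct and in fact yields the same identity $\phi(z_1)+\phi(z_2)=2\phi(z_3)\cosh s_0$ that the paper obtains, but the route is slightly different. The paper never writes $\phi(z)=\cosh\dist_\H(z,i)$; instead it uses the algebraic identity $\phi(A\cdot i)=\tfrac12\|A\|_\hs^2$ from the preliminaries, represents $z_1,z_2,z_3$ as $A\cdot(ki)$, $A\cdot(i/k)$, $A\cdot i$ for a single $A\in\SL(2,\R)$, and then expands the Hilbert--Schmidt norms of $A\,\mathrm{diag}(k^{\pm 1/2},k^{\mp 1/2})$ directly to get $\phi(z_1)+\phi(z_2)=\tfrac12(k+k^{-1})\|A\|_\hs^2$. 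Your version has the advantage of explaining \emph{why} the lemma is true: it is exactly the convexity of $z\mapsto\cosh\dist_\H(z,i)$ along geodesics, a standard fact in hyperbolic geometry. The paper's version stays within the matrix framework already set up and avoids quoting the distance formula, at the cost of being a bit more opaque. Either way the computation is a few lines.
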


\begin{proof}

Let $2 \ln k$ be the hyperbolic distance between $z_1$ and $z_2$. We may assume that
$z_1$, $z_2$ and $z_3$ are obtained from $ki$, $i/k$ and $i$ by applying
$$
A = \left (\bm a&b\\c&d \em \right ) \in \SL(2,\R).
$$
Then,
\begin{align*}
\phi(z_1)+\phi(z_2) & = \phi \left( A \left( \begin{array}{cc} k^{1/2} & 0 \\ 0 &
k^{-1/2} \end{array} \right) \cdot i \right) + \phi \left( A \left( \begin{array}{cc}
k^{-1/2} & 0 \\ 0 & k^{1/2} \end{array} \right) \cdot i \right) \\
& = \frac12 \left\| A \left( \begin{array}{cc} k^{1/2} & 0 \\ 0 & k^{-1/2} \end{array}
\right) \right\|_\hs^2 + \frac12 \left\| A \left( \begin{array}{cc} k^{-1/2} & 0 \\ 0 &
k^{1/2} \end{array} \right) \right\|_\hs^2 \\
& = \frac12 (ka^2 + k^{-1}b^2 + kc^2 + k^{-1}d^2 ) + \frac12 (k^{-1}a^2 + kb^2 +
k^{-1}c^2 + kd^2) \\
& = \frac12 (k + k^{-1}) (a^2 + b^2 + c^2 + d^2) \\
& = \frac12 \frac{1 + k^2}{k} \|A\|_\hs^2 \\
& = \frac{1 + k^2}{k} \phi (z_3),
\end{align*}
and hence $\phi(z_1)+\phi(z_2) \geq 2 \phi(z_3)$, as desired.
\end{proof}

We can now conclude.  Since we have $\phi(m(\theta))=e^{o(q)}$, we obtain \be \label {1}
|\ln \phi(\tilde m(\theta+b q\alpha))-\ln \phi(B(\theta)R_{1/4}B(\theta)^{-1} \cdot
\tilde m(\theta))|=o(1), \ee by Lemma \ref {orbit}. More precisely, if $\tilde B(\theta)$
takes $i$ to $\tilde m(\theta)$, then
\begin{align*}
\phi(\tilde m(\theta+b q\alpha)) & = \phi(\tilde A_{bq} (\theta) \cdot \tilde m(\theta))
\\
& = \phi(\tilde A_{bq} (\theta) \tilde B(\theta) \cdot i) \\
& = \frac12 \| \tilde A_{bq} (\theta) \tilde B(\theta) \|_\hs^2
\end{align*}
and
\begin{align*}
\phi(B(\theta)R_{1/4}B(\theta)^{-1} \cdot \tilde m(\theta)) & =
\phi(B(\theta)R_{1/4}B(\theta)^{-1} \tilde B(\theta) \cdot i) \\
& = \frac12 \| B(\theta)R_{1/4}B(\theta)^{-1} \tilde B(\theta) \|_\hs^2
\end{align*}

Notice that
\begin{align*}
\| \tilde A_{bq} (\theta) \tilde B(\theta) & - B(\theta)R_{1/4}B(\theta)^{-1}
\tilde B(\theta) \|_\hs  = \\
& = \| B(\theta) [ B(\theta)^{-1} \tilde A_{bq}(\theta) B(\theta) - R_{1/4} ]
B(\theta)^{-1} \tilde B(\theta) \|_\hs \\
& \le \| B(\theta) \|_\hs \, \| B(\theta)^{-1} \tilde A_{bq}(\theta) B(\theta) - R_{1/4}
\|_\hs \, \| B(\theta)^{-1} \tilde B(\theta) \|_\hs \\
& \le e^{o(q)} e^{-cq/4} e^{o(q)} \\
& = e^{-cq/4 + o(q)}.
\end{align*}

The triangle inequality shows that
$$
\| A - B \| \le \varepsilon \Rightarrow \ln \frac{\|A\|}{\|B\|} \le \ln ( 1 +
\varepsilon)
$$
whenever $\| B \| \ge 1$.

It follows that
\begin{align*}
\ln \phi(\tilde m(\theta + & b q\alpha)) -\ln \phi(B(\theta)R_{1/4}B(\theta)^{-1} \cdot
\tilde m(\theta)) = \\
& = \ln \frac12 \| \tilde A_{bq} (\theta) \tilde B(\theta) \|_\hs^2 - \ln \frac12 \|
B(\theta)R_{1/4}B(\theta)^{-1} \tilde B(\theta) \|_\hs^2 \\
& = 2 \ln \frac{ \| \tilde A_{bq} (\theta) \tilde B(\theta) \|_\hs}{ \|
B(\theta)R_{1/4}B(\theta)^{-1} \tilde B(\theta) \|_\hs} \\
& = 2 \ln \left( 1 + e^{-cq/4 + o(q)} \right) \\
& = o(1),
\end{align*}
which is \eqref{1}.

Let us show that the points $z_1=\tilde m(\theta)$, $z_2=B(\theta) R_{1/4} B(\theta)^{-1}
\cdot \tilde m(\theta)$ and $z_3=m(\theta)$ are as in Lemma \ref {midpoint} (recall that
$m(\theta)=B(\theta) \cdot i$). Since $B(\theta)$ preserves hyperbolic distance, it is
enough to show this for the points
$$
B(\theta)^{-1} \cdot \tilde m(\theta), \; R_{1/4} B(\theta)^{-1} \cdot \tilde m(\theta),
\; i.
$$
Map $\H$ to $\D$ and observe that $i$ gets mapped to $0$ and the other two points to
diametrically opposite points. Thus, these points lie on a hyperbolic geodesic and $0$ is
the midpoint.

So by Lemma \ref {midpoint}, we have \be \label {2} \frac {\phi(\tilde
m(\theta))+\phi(B(\theta)R_{1/4}B(\theta)^{-1} \tilde m(\theta))} {2} \geq
\phi(m(\theta)). \ee Using (\ref {1}) and (\ref {2}) one gets (\ref {avera}).  This
completes the proof of Lemma \ref {>}.


\begin{thebibliography}{10}

\bibitem{AA} S.\, Aubry and G.\, Andr\'e,
Analyticity breaking and Anderson localization in incommensurate
lattices, \textit{Ann. Israel Phys. Soc.} \textbf{3} (1980),
133--164

\bibitem{AJ1} A.\ Avila and S.\ Jitomirskaya, The Ten Martini Problem, to appear in \textit
{Ann.\, of Math.}


\bibitem{AK1} A.\ Avila and R.\ Krikorian,  Reducibility or non-uniform
hyperbolicity for quasiperiodic Schr\"odinger cocycles,
\textit{Ann.\, of Math.} \textbf{164} (2006), 911--940

\bibitem{AK2} A.\ Avila and R.\ Krikorian,  Quasiperiodic $\mathrm{SL}(2,\R)$ cocycles, In preparation

\bibitem{AMS} J.\ Avron, P.\ van Mouche, and B.\ Simon, On the measure of the spectrum for the
almost Mathieu operator, \textit{Comm.\ Math.\ Phys.} \textbf{132}
(1990), 103--118

\bibitem{AS1} J.\ Avron and B.\ Simon, Singular continuous spectrum for a class of almost periodic
Jacobi matrices, \textit{Bull.\ Amer.\ Math.\ Soc.} \textbf{6} (1982), 81--85

\bibitem{AS2} J.\ Avron and B.\ Simon,
Almost periodic Schr\"odinger operators. II: The integrated density
of states, \textit {Duke Math. J.} \textbf {50} (1983), 369--391

\bibitem{BS} J.\ Bellissard and B.\ Simon, Cantor spectrum for the almost
Mathieu equation, \textit{J.\, Funct.\, Anal.} \textbf{48} (1982), 408--419

\bibitem{BJ} J.\ Bourgain and S.\ Jitomirskaya, Continuity of the Lyapunov
exponent for quasiperiodic operators with analytic potential, \textit {J.\, Statist.\,
Phys.} \textbf {108}  (2002), 1203--1218

\bibitem{CEY} M.D.\ Choi, G.A.\ Elliott, N.\ Yui, Gauss polynomials and the
rotation algebra, \textit{Invent.\, Math.} \textbf{99} (1990), 225--246

\bibitem{D} D.\, Damanik, Lyapunov exponents and spectral analysis of ergodic Schr\"odinger operators:
A survey of Kotani theory and its applications, \textit{Spectral
Theory and Mathematical Physics: a Festschrift in honor of Barry
Simon's 60th Birthday}, 539--563, Proc. Sympos. Pure Math., 76,
Part 2, Amer. Math. Soc., Providence, RI, 2007

\bibitem{DS} P.\, Deift and B.\ Simon,
Almost periodic Schr\"odinger operators. III: The absolutely continuous
spectrum in one dimension,
\textit {Comm.\, Math.\, Phys.} \textbf {90} (1983), 389--411

\bibitem{GS1} M.\, Goldstein and W.\, Schlag,
H\"older continuity of the integrated density of states for
quasi-periodic Schr\"odinger equations and averages of shifts of
subharmonic functions, \textit{Ann. of Math.} \textbf{154} (2001),
155--203

\bibitem{GS2} M.\, Goldstein and W.\, Schlag,
Fine properties of the integrated density of states and a
quantitative separation property of the Dirichlet eigenvalues, to
appear in \textit{Geom. Funct. Anal.}

\bibitem{G} A.\ Gordon, On the point spectrum of the one-dimensional Schr\"odinger operator,
\textit{Usp.\ Math.\ Nauk.} {\bf 31} (1976), 257--258

\bibitem{GJLS} A.\ Gordon, S.\, Jitomirskaya, Y.\, Last, and B. Simon,
Duality and singular continuous spectrum in the almost Mathieu equation,
\textit {Acta Math.} \textbf {178} (1997), 169--183

\bibitem{J} S.\ Jitomirskaya, Metal-insulator transition for the almost Mathieu operator,
\textit{Ann.\ of Math.} \textbf{150} (1999), 1159--1175

\bibitem{JM} R.\, Johnson and J.\, Moser, The rotation number for almost
periodic potentials, \textit {Comm.\ Math.\ Phys.} \textbf {84} (1982), 403--438

\bibitem{k2} S.\ Kotani, Generalized Floquet theory for stationary Schr\"odinger operators in one
dimension, \textit{Chaos Solitons Fractals} \textbf{8} (1997),
1817--1854

\bibitem{L} Y.\ Last, A relation between a.c. spectrum of ergodic Jacobi matrices and the spectra
of periodic approximants, \textit{Comm.\ Math.\ Phys.} \textbf{151}
(1993), 183--192

\bibitem{L1} Y.\ Last, Zero measure spectrum for the almost Mathieu operator, \textit{Commun.\
Math.\ Phys.} \textbf{164} (1994), 421--432

\bibitem{L2} Y.\ Last, Spectral theory of Sturm-Liouville operators on infinite intervals: a review
of recent developments, \textit {Sturm-Liouville theory}, 99--120, Birkh\"auser, Basel,
2005

\bibitem{P} J.\ Puig, Cantor spectrum for the almost Mathieu operator,
\textit{Comm.\, Math.\, Phys.} \textbf{244}  (2004), 297--309

\bibitem{s3} B.\, Simon, Almost periodic Schr\"odinger operators: a review,
\textit{Adv.\, Appl.\, Math.} \textbf{3} (1982), 463--490

\bibitem{s1} B.\ Simon, Kotani theory for one dimensional stochastic Jacobi matrices, \textit{Comm.\
Math.\ Phys.} \textbf{89} (1983), 227--234

\bibitem{s2} B.\ Simon, Schr\"odinger operators in the twenty-first century, \textit{Mathematical physics
2000}, 283--288, Imp. Coll. Press, London, 2000

\bibitem{T} D.\, Thouless, Bandwidths for a quasiperiodic tight binding
model, \textit {Phys.\, Rev.\, B} \textbf{28} (1983), 4272--4776

\end{thebibliography}
\end{document}